\newtheorem{theorem}{Theorem}
\newtheorem{lemma}[theorem]{Lemma}
\newtheorem{proposition}[theorem]{Proposition}
\newtheorem{assumption}[theorem]{Assumption}
\newtheorem{remark}[theorem]{Remark}
\newcommand{\R}{\mathbb R}
\newcommand{\Rd}{\mathbb R^d}
\newcommand{\E}{\mathbb E}
\renewcommand{\P}{\mathbb P}
\renewcommand{\L} {\mathcal L}
\newcommand{\Lk}{L^\kappa}
\newcommand{\Dom}{\mathcal D}
\newcommand{\taud}{\tau_\partial}
\newcommand{\ubar}[1]{\text{\b{$#1$}}}
\newcommand{\Res}{\mathcal R}
\newcommand{\etLk}{\exp(-t\Lk)}
\newcommand{\esLk}{\exp(-s\Lk)}
\newcommand{\kapMe}{\kappa_M^{\textrm e}}
\newcommand{\nut}{\nu_{\pi}}
\newcommand{\nur}{\nu_{\mu}}
\newtheorem{contribution}{Contribution}
\newcommand{\footremember}[2]{%
    \footnote{#2}
    \newcounter{#1}
    \setcounter{#1}{\value{footnote}}%
}
\newcommand{\footrecall}[1]{%
    \footnotemark[\value{#1}]%
} 
\title{Regeneration-enriched Markov processes with application to Monte Carlo}  
\author{
Andi Q. Wang\footremember{Bristol}{University of Bristol, Department of Mathematics}\footnote{Corresponding author. Address: Fry Building, Woodland Road, Bristol, BS8 1UG, UK. Email: \href{mailto:andi.wang@bristol.ac.uk}{andi.wang@bristol.ac.uk}.}
\and Murray Pollock\footremember{New}{Newcastle University, School of Mathematics, Statistics and Physics}\footremember{ATI}{The Alan Turing Institute}
\and Gareth O. Roberts\footremember{War}{University of Warwick, Department of Statistics}\footrecall{ATI}
\and David Steinsaltz\footnote{University of Oxford, Department of Statistics}}
\date{June 2020}
\begin{document}

\maketitle

\begin{abstract} 
We study a class of Markov processes that combine local 
dynamics, arising from a fixed Markov process, 
with regenerations arising at a 
state-dependent rate. 
We give conditions under which such processes possess a given target distribution as their invariant measures, thus making them amenable for use within Monte Carlo methodologies. 
Since the regeneration mechanism can compensate the choice of local dynamics, while retaining the same invariant distribution, great flexibility can be achieved in selecting local dynamics, and the mathematical analysis is simplified.
We give straightforward conditions for the process to possess a central limit theorem, and additional conditions for uniform ergodicity and for a coupling from the past construction to hold, enabling exact sampling from the invariant distribution. We further consider and analyse a natural approximation of the process which may arise in the practical simulation of some classes of continuous-time dynamics. 

\textit{Keywords:} {Right process},
{regenerative Markov process},
{inhomogeneous Poisson process},
{Markov chain Monte Carlo},
{coupling from the past}.
\end{abstract}

\section{Introduction}
In this work we study a broad class of continuous-time Markov processes $X$ which are defined by
superimposing regenerative dynamics 
on to an existing continuous-time Markov process $Y$ on state space $E$. 
The precise definition of the process is given in Section \ref{sec:construction}, but $X$ can be seen informally as a Markov process with infinitesimal generator $L^\mu$ given by
\begin{align}
     L^\mu f(x) = Q f(x) + \kappa(x) \int \left( f(y) - f(x) \right)\mu(y)m(\dif y),     \label{eq:gen}
\end{align}
where $Q$ denotes the infinitesimal generator of the process $Y$.

We will refer to the function $\kappa$ as the \textit{regeneration rate} and to $\mu$ as the \textit{regeneration density}. Collectively, $\kappa$ and $\mu$ constitute the \textit{global dynamics}.
We will refer to the dynamics defined by the process $Y$ as the \textit{local dynamics}. For example, we may choose the local dynamics to be a Brownian motion on $\Rd$ or a continuous-time jump process, such as a suitably-defined Metropolis--Hastings chain embedded in continuous time.

Fundamental to the introduction of this class of Markov processes -- which we term {\em Restore} processes as they are {\em Randomly Exploring and  STOchastically REgenerating} -- is that the global regenerative dynamics we introduce can enrich the existing local dynamics of $Y$ in a compensatory manner, such that the invariant distribution of $X$ is known. This is of particular application within Monte Carlo methodology as the dynamics can often be chosen, and the process straightforwardly simulated, such that its invariant distribution coincides with a prescribed target density of interest, $\pi$.

This incorporation of global regenerative dynamics to enrich an existing continuous-time Markov process $Y$ introduces a number of directions for both theoretical and methodological innovation, which we explore in this paper. 
Since the process naturally exhibits regenerations, mathematical analysis of the Restore process is simplified considerably. Indeed, the traditional approach of analysing Markov chains, in the sense of \cite{Meyn1993a}, crucially relies on the identification of regeneration times.

Thus the Restore process forms the basis of a new approach to Monte Carlo sampling, which we term the \textit{Restore sampler}.
The Restore sampler also provides a simple recipe for introducing (nonreversible) rejection-free moves to existing samplers. This can be done in cases where standard Markov chain Monte Carlo (MCMC) algorithms may exhibit poor mixing. This is discussed in Sections~\ref{subsec:jump_proc}. 

The Restore process is an instance of a `resurrected' or `returned' process, which instantaneously returns to the state space after being killed. Such processes have been utilized extensively within probability literature. Their use goes back to the very foundations of Markov chain theory, \cite{Doob1945}, but such processes have been harnessed particularly effectively in the study of \textit{quasi-stationarity}. See for instance, \cite[Section 3.4]{Bartlett1960}, \cite{Darroch1965}, \cite[Chapter 4.4]{Collet2013}, \cite{Barbour2010, Barbour2012}, \cite{Benaim2018}, \cite{wang2019approx}, \cite{Wang18Note}. 
For example, such processes have been to used to approximate quasi-stationary distributions, and in this context, \cite{Darroch1965} noted that for discrete-time, finite state space resurrected processes, the invariant distribution could be `made into almost any distribution'. The work of this paper demonstrates that for continuous time and general state spaces, this is also the case.

The idea of identifying regeneration times within a given MCMC sampler goes back to \cite{Mykland1995}, using the very elegant splitting technique of \cite{Nummelin1978}. The area has continued to develop actively, as seen for instance in the contributions of \cite{gilks1998adaptive,Hobert2002,
Brockwell2005,
Minh2012, Lee2014}.
The idea of hybridising separate dynamics has also had a long history, see, for instance, \cite{Tierney1996, Murdoch1998, Murdoch2000}, although these typically involve combining separate MCMC chains which are already themselves $\pi$-invariant. The Restore process offers practitioners considerable scope to design highly optimised sampling algorithms due to the flexibility of being able to `hybridise' dynamics which are separately not $\pi$-invariant.

Unlike traditional MCMC methods, the Restore sampler is a fundamentally continuous-time sampler, as the inhomogeneous Poisson clock dictating the regeneration events is crucial for aligning the local and global dynamics. 
In a similar vein, the class of \textit{piecewise-deterministic Markov processes} (PDMPs, \cite{Davis1984}), and \textit{quasi-stationary Monte Carlo methods} (QSMC) also make use of an inhomogeneous Poisson process to drive the process towards the target distribution $\pi$; see \cite{Vanetti2017, Wang2019}.
Notable examples of such methods include the Bouncy Particle Sampler, \cite{Bouchard-Cote2018}, the Zig-Zag Sampler, \cite{Bierkens2019}, ScaLE, \cite{Pollock2016} and ReScaLE, \cite{Kumar2019}. 

Sampling algorithms which rely upon continuous-time dynamics often require some form of approximation for their practical implementation, and the resulting approximate process can exhibit algorithmic instability, or possess an approximate invariant distribution which is intractable. However, there is considerable scope and promise to understand the effect of such approximations with the Restore process, due to the global regenerative dynamics with which it is constructed and the ease with which it can be mathematically analysed (for instance, in the sense of \cite{Asmussen2007}). In Section~\ref{s:truncated}, we consider one natural approximation to the Restore process in which the regeneration rate is truncated. 

\subsection{Summary of results}

We begin in Section~\ref{sec:construction} by formally introducing the Restore process on an abstract state space $E$, and in Section~\ref{sec:invariance} we will establish the following.

\begin{contribution}[$\pi$-invariance:  Theorems~\ref{thm:restore_invariant_diffusion},~\ref{thm:jump_invar}] 
    Assume that we are given a positive target density $\pi$ on $E$, a regeneration density $\mu$ with on $E$, and an interarrival process $Y$ with infinitesimal generator $Q$ with adjoint $Q^*$. 
We assume that we have chosen a constant $C>0$ such that
\begin{equation}
    Q^*\pi +C\mu \ge 0.
    \label{eq:nonnong_cond}
\end{equation}

    Under a range of settings and regularity conditions, to be detailed in Section \ref{sec:invariance}, the resulting Restore process with interarrival dynamics $Y$, regeneration rate $\kappa$ and regeneration density $\mu$ has invariant density $\pi$.
\end{contribution}
We consider the following two indicative settings: symmetric diffusion processes and continuous-time jump processes.

In Section~\ref{sec:limiting_properties}, we study limiting properties of the Restore process and will present the following results.

\begin{contribution}[Central Limit Theorem: Theorem~\ref{thm:CLT}] 
    Writing $(T_n)$ for the regeneration times, then for appropriate functions $f$, where $\sigma_f^2$ is the asymptotic variance defined in \eqref{eq:rest:var_f}, then under appropriate regularity conditions the following holds for the Restore process:
         \begin{equation*}
         \sqrt n \left ( \frac{\int_0^{T_n} f(X_s)\dif s}{T_n} - \pi[f] \right) \overset{d}\to N\left (0, \sigma_f^2\right).
     \end{equation*}
\end{contribution}

Under additional assumptions, we will derive uniform ergodicity and a \textit{coupling from the past} (CFTP) construction (following \cite{Propp1996}), which is particularly useful in the context of Monte Carlo simulation, since it allows us to obtain an exact draw from the target $\pi$.

\begin{contribution}[Uniform ergodicity, CFTP: Proposition~\ref{prop:unif_erg}, Theorem~\ref{thm:CFTP}] 
    Assume that the regeneration rate $\kappa$ is uniformly bounded away from 0 and basic regularity conditions hold. 
    Then 
the Restore process is uniformly ergodic. Furthermore, there is a straightforward coupling from the past construction.
\end{contribution}
Indeed, in Theorem~\ref{thm:rej_samp}, we show that the classical rejection sampler is a special case of this coupling from the past construction.

In Section~\ref{sec:simulation}, we discuss some practical considerations related to the Restore sampler, and in particular we present a result concerning the error incurred when running one natural approximation of the Restore process. 

\begin{contribution}[Truncated rate: Theorem~\ref{Thm:bias_M}, Proposition~\ref{prop:trunc_bd}]
    When the interarrival process is a diffusion and $\kappa$ is bounded away from 0, consider running the Restore process with a \textit{truncated} version of the regeneration rate $\kappa_M$: 
    \begin{equation*}
        \kappa_M := \kappa \wedge M.
    \end{equation*}
    Writing $\pi_M$ for the invariant distribution of the resulting approximate process, we provide a bound on the error $\|\pi_M-\pi\|_{1}$ in total variation and show it vanishes to 0 as the truncation level $M\to \infty$.
\end{contribution}

Some simple examples highlighting various aspects of the Restore sampler are given in Section~\ref{sec:Restore_examples}. To conclude, in Section~\ref{sec:Restore_conclusions} we discuss the limitations of our approach, and possible future directions. Some technical proofs are omitted from the body of the text for readability, but can found in the Appendices.

\section{The Restore process}
\label{sec:construction}
First we formalize the informal definition of the Restore process given in the introduction. We define the process in a general, abstract framework.

Let $(E,m)$ be a measure space, where $E$ is a Radon topological space with its Borel $\sigma$-algebra $\mathcal E$ and $m$ is a $\sigma$-finite Radon measure on $\mathcal E$, for example $\Rd$ equipped with Lebesgue measure. 
We assume that we are given a \textit{right process} $Y=(\Omega', \mathcal F', \mathcal F_t',Y_t, \P_x^0)$ evolving on $E$. 
Right processes are an abstract class of right-continuous strong Markov processes.
We do not repeat their precise definition, which is highly technical, here; the interested reader is referred to \cite[Chapter 20]{Sharpe1988}, instead we give a list of examples in the following lemma.

\begin{lemma}
        The following processes are examples of right processes: 
 deterministic right-continuous flows, 
Feller processes, 
Markov jump processes.
\end{lemma}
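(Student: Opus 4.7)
The plan is to verify, case by case, the axioms that define a right process in the sense of \cite[Chapter 20]{Sharpe1988}. Recall that, stripped to essentials, these are: the state space is a Radon (hence Lusin) space with its Borel $\sigma$-algebra; the process is normal, right-continuous, and strong Markov; and for every $\alpha\geq 0$ and every $\alpha$-excessive function $f$ of the resolvent, the composition $t\mapsto f(X_t)$ is a.s.\ right-continuous. Since the three listed classes are handled by independent arguments, I would prove the lemma as three separate sub-claims.

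For deterministic right-continuous flows $X_t(x)=\phi_t(x)$, right-continuity of paths is by hypothesis, and the (strong) Markov property is immediate from the semigroup identity $\phi_{t+s}=\phi_t\circ\phi_s$ together with the fact that randomness is absent (any stopping time is deterministic conditional on $X_0$). Excessive functions along paths are right-continuous because for $f$ $\alpha$-excessive one has $f(\phi_t x)=e^{\alpha t}\int_t^\infty e^{-\alpha s}\alpha f(\phi_s x)\,\dif s$ up to a deterministic limit, and right-continuity of the flow transfers to $f\circ\phi$. For Feller processes (transition semigroup preserving $C_0(E)$ and strongly continuous there), I would quote the classical construction giving the canonical càdlàg modification, the strong Markov property, and right-continuity of $\alpha$-excessive functions along paths, as developed e.g.\ in \cite[Chapter 20]{Sharpe1988}; essentially every Feller process is a right process, and this requires no new argument beyond citing the standard result. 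For Markov jump processes, defined by a jump rate and a jump kernel through the usual Kolmogorov-type construction with exponential holding times, right-continuity of paths is built into the construction (paths are piecewise constant between jumps), the strong Markov property is inherited from the memorylessness of the exponential holding times, and right-continuity of $\alpha$-excessive functions along paths follows from the fact that the paths are constant in a right neighbourhood of any time not accumulating jumps (and, under the usual non-explosion hypothesis, the jumps are locally finite).

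The main obstacle is not any one of these individual verifications---each is folklore---but rather stating the hypotheses precisely enough that the lemma is literally correct. In particular, the jump-process case needs mild regularity: non-explosion of the jump chain (so paths are well-defined for all $t$) and measurability of the rate and kernel with respect to the Borel $\sigma$-algebra. Similarly, for deterministic flows one needs measurability of $(t,x)\mapsto \phi_t(x)$, which is usually packaged into the definition of a measurable flow. I would therefore state these standing regularity assumptions at the top of the proof and then dispatch each of the three bullet points by either direct verification or citation.

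Given that the paper uses this lemma only as an orienting list of examples rather than as a stepping stone in later proofs, I would keep the argument short: write one short paragraph per class, citing \cite[Ch.~20]{Sharpe1988} and, for Feller processes specifically, \cite[Theorem 8.11]{Sharpe1988} (or an equivalent result in Blumenthal--Getoor), rather than recopying the general theory. The output is thus essentially a referencing exercise wrapped around three one-line verifications of right-continuity and the (strong) Markov property.
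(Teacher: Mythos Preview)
Your proposal is correct and takes essentially the same approach as the paper, which simply cites \cite[Exercise 8.8]{Sharpe1988}, \cite[Exercise 9.27]{Sharpe1988} and \cite[Exercise 14.18]{Sharpe1988} for the three cases respectively. The paper is even more terse than your plan---it offers no verification at all, only the three exercise references---so your added paragraphs sketching right-continuity, the strong Markov property, and the behaviour of excessive functions along paths would be a strict elaboration of what the authors actually wrote.
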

\begin{proof}
    See \cite[Exercise 8.8]{Sharpe1988}, \cite[Exercise 9.27]{Sharpe1988} and \cite[Exercise 14.18]{Sharpe1988}.
\end{proof}

\begin{remark}
    Recall that a Feller process is Markov process on a locally compact, Hausdorff, second countable space $E$, whose semigroup $(P_t)$ is strongly continuous on $C_0(E)$, the set of continuous functions vanishing at infinity. Examples of Feller processes include L\'evy processes, \cite[p50]{Sharpe1988}, and diffusions such as the ones studied in \cite[Chapter 1]{Demuth2000}.
    \label{rmk:Feller}
\end{remark}

For a general initial distribution $\nu$ we write $\P^0_\nu = \int_E \nu(\dif x)\,\P_x^0$.
Let $\kappa: E \to \R^+ = [0, \infty)$ be a locally bounded measurable function, the \textit{regeneration rate}. Define the \textit{lifetime} $\taud$ as
\begin{equation}
    \tau_\partial := \inf\bigg\{t\ge0: \int_0^t \kappa(Y_s)\dif s\ge \xi\bigg \},
    \label{eq:taud}
\end{equation}
where $\xi\sim \text{Exp}(1)$, independent of $Y$. Set $\inf \varnothing = \infty$.

Fix a probability measure $\nur$ on $(E,m)$, the \textit{regeneration distribution}. 
We define the Restore process $X=(X_t)_{t \ge 0}$ to be the process given by
\begin{equation}
    X_t = \sum_{i=0}^\infty  1_{[T_{i}, T_{i+1})}(t)\, Y^{(i)}_{t - T_{i}},
    \label{eq:Restore_process}
\end{equation}
where $(Y^{(0)}, \tau^{(0)})$ is a realisation of $(Y, \taud)$ with $Y_0 = x$, and $(Y^{(i)}, \tau^{(i)})_{i=1}^\infty$ are i.i.d. realisations of $(Y, \taud)$ under $\P^0_{\nur}$, namely with $Y_0 \sim \nur$.
The $(T_i)_{i=0}^\infty$ are given by $T_0 = 0$, and 
$  T_n = \sum_{i=0}^{n-1} \tau^{(i)}$, for each $n=1,2,\dots$. 

This defines a Markov process $X=(\Omega, \mathcal F, \mathcal F_t, X_t, \P_x)$ with state space $(E,m)$. For an arbitrary initial distribution $\nu$, as usual we set $\P_\nu = \int \dif \nu(x) \P_x$. In future, the regeneration measure $\nur$ will be given by a density function $\mu$ with respect to the reference measure $m$, and hence for its semigroup we will write $\{P_t^\mu: t \ge 0\}$. 
We will then refer to this process as the \textit{Restore process with interarrival dynamics $Y$, regeneration rate $\kappa$, and regeneration density $\mu$}. 

\begin{lemma}
    \sloppy Let $Y$ be a right process on the Radon space $(E,m)$ with Radon measure $m$, $\kappa:E\to \R^+$ a locally bounded measurable function, and $\mu$ a probability measure on $E$. Then the resulting Restore process $X=(\Omega, \mathcal F, \mathcal F_t, X_t, \P_x)$ with interarrival dynamics $Y$, locally bounded nonnegative regeneration rate $\kappa$ and regeneration density $\mu$ defines a right process with state space $(E,m)$. In particular, $X$ is right-continuous and strong Markov. Moreover, $T_n \to \infty$ almost surely.
    \label{prop:Restore_exist}
\end{lemma}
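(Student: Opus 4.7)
The plan is to verify three properties in turn: non-explosion $T_n\to\infty$, right-continuity of sample paths, and the right process (hence strong Markov) property. The first two are direct consequences of the explicit construction; the principal difficulty is the third.

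First I would establish $T_n\to\infty$ almost surely. For $i\ge 1$, the cycle lengths $\tau^{(i)}$ are i.i.d.\ copies of $\taud$ under $\P^0_{\nur}$. Since $\kappa$ is locally bounded and each $Y^{(i)}$ has right-continuous paths, for any starting point $y\in E$ there is an open neighbourhood $U$ of $y$ on which $\kappa\le K=K(y)<\infty$, and the first exit time $\sigma$ of $Y^{(i)}$ from $U$ is strictly positive almost surely by right-continuity at zero. Hence $\int_0^{\sigma}\kappa(Y^{(i)}_s)\dif s\le K\sigma$, which together with the independent $\mathrm{Exp}(1)$ threshold $\xi^{(i)}$ shows $\tau^{(i)}>0$ almost surely. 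Since the $(\tau^{(i)})_{i\ge 1}$ are i.i.d.\ and strictly positive, there exists $\delta>0$ with $\P(\tau^{(i)}>\delta)>0$, and the second Borel--Cantelli lemma forces $\tau^{(i)}>\delta$ to hold for infinitely many $i$, so $T_n\to\infty$ almost surely. The deterministic initial point $x$ is covered by the same argument applied to $\tau^{(0)}$.

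Right-continuity of $X$ then follows directly from the representation \eqref{eq:Restore_process}. On each stochastic interval $[T_i,T_{i+1})$ the Restore process coincides with the right-continuous path $Y^{(i)}_{\cdot - T_i}$, and at the left endpoint $T_i$ the right-hand limit equals $Y^{(i)}_0=X_{T_i}$ by right-continuity of $Y^{(i)}$ at zero. Non-explosion $T_n\to\infty$ ensures these intervals exhaust $[0,\infty)$.

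The hard part is the right process property, where the plan is to invoke the standard ``piecing out'' or revival construction. First, the killed process $Y^\dagger$, obtained from $Y$ by subordination under the continuous multiplicative functional $\exp(-\int_0^t\kappa(Y_s)\dif s)$, is itself a right process on $E\cup\{\partial\}$; this follows from the multiplicative-functional theory for right processes (see \cite[Chapter 12]{Sharpe1988}) since $\kappa$ is locally bounded and measurable. The Restore process is then the concatenation of i.i.d.\ copies of $Y^\dagger$, with each successive copy reinitialised from $\nur$. The classical revival/concatenation theorem for right processes (in the spirit of Ikeda--Nagasawa--Watanabe, as formulated in \cite{Sharpe1988}) then yields that $X$ is a right process on $E$, provided the regeneration times do not accumulate -- which is exactly the content of the first step. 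Right-continuity and the strong Markov property are built into the definition of a right process, and so follow automatically.
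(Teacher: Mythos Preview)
Your proposal is correct and follows essentially the same route as the paper: identify the killed process as the subprocess generated by the multiplicative functional $m_t=\exp(-\int_0^t\kappa(Y_s)\,\dif s)$, appeal to Sharpe to conclude it is a right process, and then invoke the concatenation/revival result in Sharpe to deduce that the Restore process is a right process. The paper pins down the specific references as Theorem~61.5 (for the killed process) and Exercise~14.17 (for concatenation) of \cite{Sharpe1988}, whereas your citations are vaguer; and for $T_n\to\infty$ the paper argues via $\E_\mu[\taud]>0$ together with the i.i.d.\ structure rather than your Borel--Cantelli argument, but these are cosmetic differences.
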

\fussy
\begin{proof}
    See Appendix~\ref{subsec:pf_exist}.
\end{proof}

\section{Invariance}
\label{sec:invariance}
Suppose we are given a probability measure $\nut$ on $E$, our target measure of interest.
We will assume throughout that the target measure $\nut$ and regeneration measure $\nur$ are given by density functions $\pi,\mu$ respectively with respect to the reference measure $m$, namely
\begin{equation*}
    \nut(\dif x)=\pi(x)m(\dif x),\quad \nur(\dif x)=\mu(x)m(\dif x).
\end{equation*}
We would like to construct a Restore process $X$ whose invariant distribution coincides with $\pi$. In this section we 
formulate conditions in several settings under which this is possible.

We consider the following settings: when the interarrival process $Y$ is a symmetric diffusion, and when the interarrival process is a jump process. This latter situation includes, for example, the case when the state space $E$ is countable.

Writing $Q$ for the generator of the process $Y$, define the regeneration rate $\kappa: E\to\R$ by
    \begin{equation}
        \kappa(x) := \frac{Q^* \pi(x)}{\pi (x)} + C \frac{\mu(x)}{\pi(x)}, \quad x \in E.
        \label{eq:kappa_generic}
    \end{equation}
We will make rigorous sense of this expression in the subsequent sections. 
\begin{remark}
    Because of the flexibility provided by the constant $C$ in \eqref{eq:kappa_generic}, in practice we do not require $\mu$ or $\pi$ to be normalized in order to compute $\kappa$.
\end{remark}

Given the formal generator \eqref{eq:gen}, we can make intuitive sense of the expression \eqref{eq:kappa_generic} from the following formal manipulations:
\begin{align*}
    \nu_\pi Qf = \int \pi(x) (Qf)(x) m(\dif x) = \int (Q^* \pi)(x) f(x)m(\dif x).
\end{align*}
Taking $f\equiv 1$ the constant function, we see that $\int Q^* \pi(x) m(\dif x)=0$, since $Q1\equiv 0$. Then,
\begin{align*}
    \nu_\pi[\kappa (\nu_\mu[f]-f)] &=\nu_\pi \left [\pi^{-1} (Q^*\pi +C\mu)\right] \nu_\mu[f]-\nu_\pi[\pi^{-1} f Q^*\pi] - C\nu_\pi[\pi^{-1}\mu f] \\
    &= 0 -m[f Q^*\pi] + C\left( m[\mu f]m[\mu] - m[\mu f] \right).
\end{align*}
This final bracket is 0 since $m[\mu]=1$, as $\nu_\mu$ is a probability measure. This allows us to conclude that
\begin{equation*}
    \nu_\pi [L^\mu f] = \nu_\pi [Qf]-\nu_\pi[\kappa (\nu_\mu[f]-f)]=m[\pi Qf]-m[f Q^*\pi]=0.
\end{equation*}
This calculation shows that our $\kappa$ is indeed of the right form to ensure invariance of $\nu_\pi$.

We emphasize again that the preceding calculations are formal and do not constitute a rigorous proof. In order to turn this into a full proof, one must first show that the operator $L^\mu$ given in \eqref{eq:gen} is indeed the generator of the Restore process (as constructed in Section~\ref{sec:construction}), carefully noting the domain $\mathcal D(L^\mu)$. We must then establish that the above calculations hold for a collection of functions $f \in D$, and prove that $D$ constitutes a core of the generator.

\begin{remark}
    Turning these calculations into a proof in a general setting is difficult for several reasons. First, establishing that $L^\mu$ is the generator of the Restore process is complicated since $\kappa$ is not necessarily bounded, thus the Restore process is not necessarily Feller in the sense of Remark~\ref{rmk:Feller}. This prevents us from straightforwardly establishing dissipativity, via the positive maximum principle, which would enable the application of general reformulations of the Hille--Yosida theorem such as Theorem~7.1 of \cite{Ethier1986}. 
    Second, proving that a collection of functions $D$ constitute a core for the generator is generally challenging. For recent advances on this topic for PDMPs, see the work of \cite{Durmus2018}.
\end{remark}
These difficulties associated with working in a general operator-theoretic setting are our motivation for considering our two specific settings separately; the diffusion setting, and the jump process setting. 
Indeed, one of the key contributions of this work is that in each setting we will give a proof of invariance which avoids using the full generator approach and the highly technical difficulties outlined above.

Once invariance is established, in order to approximate integrals we can make use of the following result. Recall that a nonnegative random variable is \textit{non-lattice} if it is not concentrated on a set of the form $\{\delta, 2\delta, \dots\}$ for any $\delta >0$.
\begin{theorem}
    Suppose that the Restore process $X$, as in the conclusion of Lemma~\ref{prop:Restore_exist}, is defined on a metric space $E$, its semigroup $P^\mu_t$ maps continuous functions to continuous functions for each $t \ge 0$, has a unique stationary distribution $\pi$, that $\E_\mu[\taud]<\infty$, and that the lifetimes are non-lattice. Then for any bounded measurable function $f: E \to \R$, we have that
    \begin{equation}
        \nu_\pi[f] = \frac{\E_\mu [\int_0^{\tau^{(0)}}f(X_s)\dif s]}{\E_\mu[\tau^{(0)}]},
        \label{eq:pi_f}
    \end{equation}
    and furthermore we have
    almost sure convergence of the ergodic averages: as $t \to \infty$,
    \begin{equation*}
        \frac{1}{t}\int_0^t f(X_s)\dif s \to \nu_\pi[f].
    \end{equation*}
\end{theorem}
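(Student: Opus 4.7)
The plan is to decompose the time-integral along the regeneration cycles, apply the strong law of large numbers (SLLN) to the i.i.d.\ cycle quantities, and then identify the resulting cycle-based ratio with $\nu_\pi$ via the assumed uniqueness of the stationary distribution.

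Under $\P_\mu$, the construction of Section~\ref{sec:construction} makes the pairs $(\tau^{(i)}, R_i)$ with $R_i := \int_{T_i}^{T_{i+1}} f(X_s)\,\dif s$ genuinely i.i.d.\ for $i \ge 0$. Since $|R_i| \le \|f\|_\infty\, \tau^{(i)}$ and $\E_\mu[\tau^{(0)}] < \infty$, the SLLN yields
\begin{equation*}
    \frac{1}{n}\sum_{i=0}^{n-1}\tau^{(i)} \to \E_\mu[\tau^{(0)}], \qquad \frac{1}{n}\sum_{i=0}^{n-1} R_i \to \E_\mu[R_0] \qquad \P_\mu\text{-a.s.}
\end{equation*}
Let $N_t := \sup\{n \ge 0 : T_n \le t\}$, which tends to $\infty$ a.s.\ by Lemma~\ref{prop:Restore_exist}. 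A routine Borel--Cantelli argument using $\E_\mu[\tau^{(0)}]<\infty$ shows $\tau^{(n)}/n \to 0$ a.s., whence $\tau^{(N_t)}/t \to 0$ a.s.\ after composing with $N_t/t \to 1/\E_\mu[\tau^{(0)}]$. Since
\begin{equation*}
    \left|\int_0^t f(X_s)\,\dif s - \sum_{i=0}^{N_t - 1} R_i\right| \le \|f\|_\infty(t - T_{N_t}) \le \|f\|_\infty\, \tau^{(N_t)},
\end{equation*}
dividing by $t$ gives
\begin{equation*}
    \frac{1}{t}\int_0^t f(X_s)\,\dif s \longrightarrow \frac{\E_\mu[R_0]}{\E_\mu[\tau^{(0)}]} \qquad \P_\mu\text{-a.s.}
\end{equation*}
For an arbitrary initial distribution $\nu$, $T_1 = \tau^{(0)}$ is a.s.\ finite, and by the strong Markov property at $T_1$ (valid since $X$ is a right process), the shifted process $(X_{T_1 + t})_{t \ge 0}$ has the same law as the Restore process started from $\mu$; the contribution of $[0, T_1]$ to the time average vanishes, so the same a.s.\ limit persists under $\P_\nu$.

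Finally, define the probability measure $\tilde\nu(A) := \E_\mu[\int_0^{\tau^{(0)}} 1_A(X_s)\,\dif s] / \E_\mu[\tau^{(0)}]$, so that $\tilde\nu(f)$ equals the a.s.\ limit above. Starting the process from $\nu_\pi$, bounded convergence (since $|t^{-1}\int_0^t f(X_s)\,\dif s| \le \|f\|_\infty$) together with stationarity yields, for every $t>0$,
\begin{equation*}
    \nu_\pi(f) = \E_{\nu_\pi}\!\left[\frac{1}{t}\int_0^t f(X_s)\,\dif s\right] \xrightarrow{t \to \infty} \E_{\nu_\pi}[\tilde\nu(f)] = \tilde\nu(f),
\end{equation*}
which is \eqref{eq:pi_f}; combined with the a.s.\ convergence above, this completes the proof.

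I expect the main technical hurdle to be the clean invocation of the strong Markov property at $T_1$ within the right-process framework and the tail estimate $\tau^{(N_t)} = o(t)$ a.s. The hypotheses that $P^\mu_t$ preserves continuity and that the lifetimes are non-lattice do not appear to be strictly needed for the Ces\`aro-type conclusions stated here; they become relevant for stronger variants such as pointwise convergence $P^\mu_t f(x) \to \nu_\pi(f)$ obtained through the key renewal theorem.
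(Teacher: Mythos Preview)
Your argument is correct and takes a genuinely different route from the paper. The paper first establishes \eqref{eq:pi_f} by citing Theorem~1.2 of Chapter~6 in \cite{Asmussen2003}---this is where the non-lattice and continuity-preservation hypotheses are used---and only then derives the ergodic-average convergence by a sandwich argument: splitting $f$ into positive and negative parts and bounding $\int_0^t f(X_s)\,\dif s$ between $\int_0^{T_{N(t)}}$ and $\int_0^{T_{N(t)+1}}$, combined with $N(t)/t \to 1/\E_\mu[\tau^{(0)}]$ and the SLLN. You reverse the logical order: first obtain the a.s.\ ergodic limit as the cycle ratio $\tilde\nu(f)$ via the SLLN and a direct residual bound $\tau^{(N_t)}/t\to 0$, and then identify $\tilde\nu$ with $\nu_\pi$ via stationarity and bounded convergence. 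Your approach is more self-contained and, as you correctly note, does not invoke the non-lattice or continuity hypotheses.

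There is one small gap worth flagging. Your identification step starts the process from $\nu_\pi$ and applies the a.s.\ convergence you established ``for arbitrary initial distribution $\nu$'', but that extension rested on $T_1 < \infty$ $\P_\nu$-a.s., which is not guaranteed by the stated hypotheses (only $\E_\mu[\taud]<\infty$ is assumed; if $\kappa$ vanishes on a region the interarrival process can be trapped in, the first regeneration from some starting points may never occur). A clean patch that preserves your self-contained style is to show directly that $\tilde\nu$ is invariant for $\{P_t^\mu\}$---a standard one-line regenerative computation using the i.i.d.\ cycle structure under $\P_\mu$---and then conclude $\tilde\nu = \nu_\pi$ from the assumed uniqueness, bypassing the need to start from $\nu_\pi$ at all.
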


\begin{proof}
    By Theorem~1.2 of \cite[Chapter 6]{Asmussen2003}, and uniqueness of the stationary distribution, it follows that \eqref{eq:pi_f} holds.
    Convergence of the ergodic averages then follows from the following arguments from renewal theory: First split $f$ into positive and negative parts, so we may assume that $f$ is nonnegative. Writing $(N(t))_{t\ge 0}$ for the renewal process of complete lfietimes before time $t$, we may thus bound
    \begin{equation*}
        \int^{T_{N(t)}}_0 f(X_s)\dif s \le \int^{t}_0 f(X_s)\dif s \le \int^{T_{N(t)+1}}_0 f(X_s)\dif s.
    \end{equation*}
    By the strong law of large numbers for renewal processes, Theorem~1 of \cite[10.2]{Grimmett2001}, we know that $N(t)/t \to 1/\E_\mu[\tau^{(0)}]$ almost surely. 
    We can conclude the argument by then applying the strong law of large numbers to $\int^{T_{N(t)}}_0 f(X_s)\dif s/N(t)$ and similarly for the upper bound.
\end{proof}

\subsection{Symmetric diffusions}
\label{subsec:symm_diff}
We first consider Restore when the underlying process is a symmetric diffusion on $E=\Rd$. For a smooth $C^\infty$ function $A:\Rd \to \R$ consider the stochastic differential equation (SDE)
\begin{equation}
    \dif Y_t = \nabla A(Y_t) \dif t + \dif B_t, \quad Y_0 = x,
    \label{eq:SDE}
\end{equation}
on $\Rd$ where $B$ is a standard Brownian motion on $\Rd$. Define the smooth function $\gamma: \Rd \to \R$ by
\begin{equation*}
    \gamma(y) = \exp(2A(y)), \quad y \in \Rd,
\end{equation*}
and define a measure $\Gamma$ on $\Rd$ by
\begin{equation*}
    \dif \Gamma(y) = \gamma(y) \dif y,
\end{equation*}
where $\dif y$ denotes Lebesgue measure on $\Rd$. 

    We are thus working on ${(E,m)=(\Rd, \Gamma)}$. This is an example of a Radon space with a Radon measure.

\begin{assumption}[Underlying process]
    $A:\Rd \to \R$ is a smooth $C^\infty$ function, and the SDE \eqref{eq:SDE} has a unique weak solution. The process $Y$ has a continuous symmetric transition density $p^0(t,x,y)$ on $(0,\infty)\times \Rd \times \Rd$ with respect to $\Gamma$, which satisfies the BASSA conditions of \cite[Chapter 1.B]{Demuth2000}. In particular, the diffusion is Feller, hence a right process.
    \label{assump:symmetric_markov_bassa}
\end{assumption}
The BASSA conditions of \cite[Chapter 1.B]{Demuth2000} are technical, and in Section~\ref{subsubsec:BASSA_ex} we will give examples of diffusions satisfying them.

The semigroup of the diffusion $Y$ is given for each $t\ge0$ by
\begin{equation}
    \E_x^0[f(Y_t)] = \int p^0(t,x,y) f(y) \dif \Gamma(y),
    \label{eq:semigp_Y0}
\end{equation}
for functions $f$ where this integral makes sense. Under Assumption \ref{assump:symmetric_markov_bassa}, the semigroup \eqref{eq:semigp_Y0} maps $C_0(\Rd)$ --- continuous functions vanishing at $\infty$ ---  into $C_0(\Rd)$ and is strongly continuous on $C_0(\Rd)$ with generator $-L^0$. Hence we can also write the semigroup as 
\begin{equation*}
    \E_x^0[f(Y_t)] = [\exp(-t L^0)f](x).
\end{equation*}
The action of the generator on smooth compactly supported $f$ is given by
\begin{equation*}
    -L^0 f = \frac{1}{2}\Delta f + \nabla A \cdot \nabla f.
\end{equation*}
Note that we are writing $L^0$ for \textit{minus} the generator, as is done in \cite{Demuth2000}.

Under Assumption~\ref{assump:symmetric_markov_bassa}, the semigroup is also strongly continuous on $$\mathcal L^p(\Gamma) := \left\{f:\R^d\to \R \text{ measurable}, \int_{\R^d}|f(x)|^p \dif \Gamma(x)<\infty\right\},$$
for each $1\le p <\infty$. When we want to emphasize the underlying function space we may write $-L^0_p$ for the corresponding generators on $\L^p(\Gamma)$ and ${\mathcal D(L^0_p)\subset \L^p(\Gamma)}$ for their respective dense domains. 

We now assume that the target distribution and regeneration distributions are defined by density functions with respect to $\Gamma$ denoted $\pi ,\mu \in \L^1(\Gamma)$ respectively:

\begin{assumption}[Densities]
    The target density $\pi \in \mathcal L^1(\Gamma)$, is positive on $\Rd$ and is twice continuously differentiable with $\int \pi \dif \Gamma =1$. The regeneration density $\mu$ is in $\L^1(\Gamma)$ and is nonnegative, with $\int \mu \dif \Gamma =1$. Furthermore, $\pi$ and
    $\mu$ are square-integrable --- that is, in $\L^2(\Gamma)$ --- 
    and $\pi$ is in the domain $\mathcal D(L^0_2)$.
    \label{assump:pi}
\end{assumption}
\begin{remark}
    Let us emphasize that we are writing $\pi$ and $\mu$ for densities with respect to the measure $\Gamma$, which may not necessarily be Lebesgue measure. Later on we will write $\bar \pi := \pi \gamma$ for the density with respect to Lebesgue measure.
\end{remark}
For our proofs we take $\pi, \mu$ to be normalized, but as noted previously this condition is not required in practice, because of the constant $C$ which appears in the regeneration rate.

Because $L^0$ is a self-adjoint operator on $\L^2(\Gamma)$, a sufficient condition for $\pi \in \Dom(L^0_2)$ is that $L^0 \pi \in \L^2(\Gamma)$. This is a well-known result; for a derivation, see, for example, \cite[Section 3.3.3]{Wang2020}, where a preliminary version of this work can also be found.

We can now define the regeneration rate $\kappa$, under Assumption~\ref{assump:pi}. First, define the \textit{partial regeneration rate} $\tilde \kappa$, via
\begin{equation*}
    \tilde \kappa(x) := \frac{1}{\pi (x)}\left(\frac{1}{2}\Delta \pi(x) + \nabla A \cdot \nabla \pi(x)\right), \quad x \in \R^d.
\end{equation*}
We define the actual regeneration rate $\kappa$ as follows.
Set for a given constant $C>0$,
\begin{equation}
    \kappa(x) := \tilde \kappa(x)  + C \frac{\mu(x)}{\pi(x)}, \quad x \in \Rd.
    \label{eq:kappa_symm}
\end{equation}

\begin{remark}
    Similarly to \cite{Wang2019}, writing $U := - \log \pi$, an equivalent expression for $\tilde \kappa$ is
    \begin{equation}
        \tilde \kappa(x) = \frac{1}{2}(-\Delta U(x) + |\nabla U(x)|^2) - \nabla A \cdot \nabla U(x).
        \label{eq:kappa_tilde}
    \end{equation}
\end{remark}

\begin{assumption}[Regeneration rate]
    The function $\kappa$ is continuous, and $C$ is chosen such that $\kappa \ge 0$.
    \label{assump:kappa}
\end{assumption} 
Under Assumptions \ref{assump:symmetric_markov_bassa}, \ref{assump:pi}, \ref{assump:kappa}, the process $Y$ killed at rate $\kappa$, that is, with lifetime given by $\eqref{eq:taud}$, can be analysed using Theorem 2.5 of \cite{Demuth2000}. 

\begin{proposition}
    Under Assumptions \ref{assump:symmetric_markov_bassa}, \ref{assump:pi}, \ref{assump:kappa}, the process $Y$ killed at rate $\kappa$, that is, with lifetime given by $\eqref{eq:taud}$, defines a strongly continuous sub-Markovian semigroup ${\{\exp(-tL^\kappa):t \ge 0\}}$ on $C_0(\Rd)$ with symmetric, continuous kernel $p^\kappa(t,x,y)$. The corresponding generator $-L^\kappa = -L^0 \dot- \kappa$, extends $-L^0-\kappa$. In addition, it has Feynman--Kac representation,
\begin{equation*}
\begin{split}
               \left[\exp\left(-tL^\kappa\right)f\right](x)&= \int p^\kappa (t,x,y) f(y) \dif m(y)\\
               &=\E_x\left[\exp\left(-\int_0^t \kappa(Y_s) \dif s\right) f(Y_t) \right].
\end{split}
\end{equation*}

    Furthermore, the semigroup is strongly continuous on $\L^p(\Gamma)$ for any $1 \le p <\infty$. In particular, on $\L^2(\Gamma)$, it is self-adjoint and possesses a self-adjoint generator. 
    \label{prop:generator}
\end{proposition}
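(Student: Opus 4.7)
The plan is to invoke Theorem~2.5 of \cite{Demuth2000} directly, since it is tailor-made for precisely this Feynman--Kac killing setup. The work splits into three tasks: verifying the hypotheses of that theorem in our setting, extracting the conclusions about the semigroup on $C_0(\Rd)$, and then upgrading them to strong continuity on $\L^p(\Gamma)$ together with self-adjointness on $\L^2(\Gamma)$.

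First I would check the hypotheses and quote the conclusion. The BASSA conditions on the underlying diffusion $Y$ are supplied by Assumption~\ref{assump:symmetric_markov_bassa}. For the killing rate, Theorem~2.5 requires $\kappa$ to lie in an admissible class of potentials; since Assumption~\ref{assump:kappa} makes $\kappa$ continuous and nonnegative on $\Rd$, it is in particular locally bounded and measurable, and for BASSA diffusions such functions automatically belong to the relevant (local Kato-type) admissible class used there. Quoting the theorem then yields, on $C_0(\Rd)$, the strongly continuous sub-Markovian semigroup $\exp(-tL^\kappa)$, the symmetric continuous kernel $p^\kappa$, the Feynman--Kac representation, and the identification of $-L^\kappa$ as the form sum $-L^0 \dot- \kappa$ extending $-L^0-\kappa$.

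Next I would extend these conclusions to $\L^p(\Gamma)$. Symmetry of $p^\kappa$ with respect to $\Gamma$, combined with sub-Markovianity, yields $\L^1$- and $\L^\infty$-contractivity; Riesz--Thorin interpolation then gives boundedness on every $\L^p(\Gamma)$ for $1 \le p < \infty$. Strong continuity on $\L^p(\Gamma)$ follows from the Feynman--Kac pointwise domination $p^\kappa(t,x,y) \le p^0(t,x,y)$ (immediate from $\kappa\ge 0$), which lets us compare with $\exp(-tL^0)$, already strongly continuous on $\L^p(\Gamma)$ by Assumption~\ref{assump:symmetric_markov_bassa}, together with density of $C_c(\Rd)$ in $\L^p(\Gamma)$ and the $C_0$-strong continuity just established. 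Self-adjointness on $\L^2(\Gamma)$ is then immediate from symmetry of the kernel.

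The main obstacle will be in the admissibility check: confirming that the continuous but possibly unbounded $\kappa$ defined in \eqref{eq:kappa_symm} really lies in the admissible class required by Theorem~2.5 of \cite{Demuth2000}, and that the form-sum identification of the generator follows. The saving grace is that $\kappa \ge 0$, so no quadratic form boundedness (KLMN) condition is needed --- nonnegative perturbations always produce a well-defined form sum --- which keeps this plan viable even when $\kappa$ is unbounded on $\Rd$.
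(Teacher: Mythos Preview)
Your proposal is correct and matches the paper's approach: verify that the continuous, nonnegative $\kappa$ lies in the local Kato class and then invoke Theorem~2.5 of \cite{Demuth2000} under BASSA. The only difference is cosmetic---the paper attributes \emph{all} conclusions of the Proposition, including $\L^p(\Gamma)$ strong continuity and $\L^2(\Gamma)$ self-adjointness, directly to that theorem, whereas you spell out the $\L^p$ extension by hand via interpolation and domination; your extra detail is sound but unnecessary once the citation is in place.
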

\begin{proof}
    See Appendix~\ref{sec:proof_generator}.
\end{proof}

As before, when we want to make explicit which $\L^p(\Gamma)$ space we are using, for $1\le p < \infty$, we will write $-L_p^\kappa$ for the generator of the strongly continuous semigroup on $\L^p(\Gamma)$, with corresponding domain $\Dom(L_p^\kappa)\subset \L^p(\Gamma)$. The domain of the generator
may be defined as the image of the semigroup acting on $\L^p(\Gamma)$.

\begin{remark}
    It follows from Assumption~\ref{assump:pi} that $\pi \in \Dom(L^\kappa_2)$, since both $\pi$ and $\mu$ are in $\L^2(\Gamma)$, and formally $L^\kappa \pi = C\mu$.
\end{remark}

We have one final technical assumption.
\begin{assumption}[Technical conditions on $\pi, \mu$]
    We have that 
    \begin{equation}
     \pi \in \Dom(L_1^\kappa)   ,\quad L_1^\kappa \pi = C\mu.
        \label{eq:tech_assump_pi_L1}
    \end{equation}
    Furthermore, $\mu$ is such that
    \begin{equation}
        \int \dif \Gamma(x) \mu(x)\, \E_x^0\left[ \sup_{t \in [0,1]} \left|\kappa(Y_t) \mathrm e^{-\int_0^t \kappa(Y_s) \dif s}\right| \right] <\infty.
        \label{eq:tech_cond_mu}
    \end{equation}
    \label{assump:technical}
\end{assumption}

The condition \eqref{eq:tech_assump_pi_L1} is fairly abstract, and so might be
difficult to verify in a particular case, or in a general class of
processes that one may want to consider.
Lemma~\ref{lemma:L1_pi_cond} gives a sufficient condition which we will make use of. Set
\begin{equation*}
    \bar \pi := \pi \gamma.
\end{equation*}
We write $W^{2,1}(\R^d)$ for the Sobolev space of measurable functions on $\R^d$ whose first and second derivatives are integrable with respect to Lebesgue measure on $\R^d$. 

\begin{lemma}
    Assume that Assumptions~\ref{assump:symmetric_markov_bassa}, \ref{assump:pi}, \ref{assump:kappa} hold. Suppose that the drift is at most linear in the tails: we can bound $|\nabla A(x)| \le K |x|$, for some $K>0$, for all $x$ outside of some compact set. Suppose $\pi$ is smooth, and that $\bar \pi \in W^{2,1}(\R^d)$. In addition, we require that
    \begin{equation*}
         \int_{\R^d}|\nabla A(x) \cdot \nabla \bar\pi(x)|\dif x <\infty,\quad \int_{\Rd} |\Delta A(x) \bar\pi(x)|\dif x <\infty.
    \end{equation*}
    Then \eqref{eq:tech_assump_pi_L1} holds.
    \label{lemma:L1_pi_cond}
\end{lemma}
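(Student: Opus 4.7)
The goal is to show $\pi \in \Dom(L^\kappa_1)$ with $L^\kappa_1 \pi = C\mu$, which by definition of the generator of the strongly continuous semigroup $\{\exp(-tL^\kappa)\}$ on $\L^1(\Gamma)$ (Proposition~\ref{prop:generator}) amounts to
$$\frac{1}{t}\left(\exp(-tL^\kappa)\pi - \pi\right) \longrightarrow -C\mu \quad \text{in } \L^1(\Gamma) \text{ as } t\downarrow 0.$$
My plan is to establish the Dynkin-type integrated identity
\begin{equation*}
    \exp(-tL^\kappa)\pi - \pi = -C\int_0^t \exp(-sL^\kappa)\mu\,\dif s \quad \text{in } \L^1(\Gamma),\quad t>0, \tag{$\ast$}
\end{equation*}
after which dividing by $t$ and sending $t\downarrow 0$ yields the result via strong continuity applied to $\mu\in\L^1(\Gamma)$, since $\tfrac{1}{t}\int_0^t \exp(-sL^\kappa)\mu\,\dif s \to \mu$ in $\L^1(\Gamma)$.

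The pointwise identification $L^\kappa\pi = C\mu$ on $\R^d$ is immediate from the smoothness of $\pi$ and the definition of $\kappa$: $L^0\pi = -\tilde\kappa\pi$ pointwise, and $\kappa\pi = \tilde\kappa\pi + C\mu$, so $(L^0+\kappa)\pi = C\mu$. To check $L^0\pi \in \L^1(\Gamma)$, I would use the identity
$$(-L^0\pi)\,\gamma = \tfrac{1}{2}\Delta \bar\pi - \nabla A\cdot\nabla\bar\pi - \bar\pi\,\Delta A,$$
obtained by direct calculation from $\bar\pi = \pi\gamma$ and $\nabla\gamma = 2\gamma\nabla A$. Each of the three terms on the right is in $\L^1(\dif x)$ by the hypotheses ($\bar\pi\in W^{2,1}(\R^d)$ handles the first, and the two explicit integrability conditions in the lemma handle the latter two), hence $\int|L^0\pi|\,\dif\Gamma <\infty$.

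To obtain $(\ast)$ I would apply Ito's product rule to $Z_s\pi(Y_s)$, where $Z_s := \exp(-\int_0^s \kappa(Y_u)\,\dif u)$. Since $dZ_s = -\kappa(Y_s)Z_s\,\dif s$ and $d\pi(Y_s) = -(L^0\pi)(Y_s)\,\dif s + \nabla\pi(Y_s)\cdot \dif B_s$, we obtain
$$d(Z_s\pi(Y_s)) = -Z_s(L^\kappa\pi)(Y_s)\,\dif s + Z_s\nabla\pi(Y_s)\cdot \dif B_s = -C\,Z_s\mu(Y_s)\,\dif s + Z_s\nabla\pi(Y_s)\cdot \dif B_s.$$
Localize by $\tau_n := \inf\{s\ge 0 : |Y_s|\ge n\}$: the at-most-linear-growth condition on $\nabla A$ yields non-explosion and standard moment bounds $\E_x[\sup_{s\le t}|Y_s|^p]<\infty$, so $\tau_n\uparrow\infty$ almost surely. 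Stopped at $\tau_n$, the stochastic integral is a true martingale, so taking expectations and passing $n\to\infty$ (monotone convergence for the nonnegative drift term $Z_sC\mu(Y_s)$; dominated convergence for $Z_{t\wedge\tau_n}\pi(Y_{t\wedge\tau_n})$, using $Z_s\le 1$ and integrability of $\pi$ under the transition kernel for $\Gamma$-a.e.\ starting point) yields the pointwise Dynkin identity
$$\E_x[Z_t\pi(Y_t)] - \pi(x) = -C\,\E_x\!\left[\int_0^t Z_s\mu(Y_s)\,\dif s\right].$$
A Fubini argument on the nonnegative RHS, justified because $\int\exp(-sL^\kappa)\mu\,\dif\Gamma \le 1$, identifies it with $-C\int_0^t \exp(-sL^\kappa)\mu(x)\,\dif s$, yielding $(\ast)$ pointwise; both sides lie in $\L^1(\Gamma)$, so $(\ast)$ holds in $\L^1(\Gamma)$.

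The principal technical obstacle is the localization step in Ito's formula, where $\pi$ is neither bounded nor compactly supported. The hypotheses have been chosen with precisely this in mind: the linear-growth drift gives the SDE moment bounds needed to control the martingale and the terminal term, while the $W^{2,1}$-type conditions on $\bar\pi$ ensure $\L^1(\Gamma)$-integrability of $L^0\pi$ so that the limiting drift term lies in the correct space. Once $(\ast)$ is established, the conclusion $\pi\in\Dom(L^\kappa_1)$ with $L^\kappa_1\pi = C\mu$ is immediate from strong continuity of the $\L^1(\Gamma)$-semigroup.
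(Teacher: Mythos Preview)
Your route is genuinely different from the paper's. The paper never touches It\^o's formula or a Dynkin identity; instead it approximates $\pi$ by compactly supported cutoffs $\pi_n=f_n\pi$, uses the identity $-\gamma L^0\pi_n=\tfrac12\Delta\bar\pi_n-\nabla A\cdot\nabla\bar\pi_n-(\Delta A)\bar\pi_n$ (the same one you use) to show $L^\kappa\pi_n\to C\mu$ in $\L^1(\Gamma)$, and then appeals to closedness of $L^\kappa_1$. The linear-growth hypothesis on $\nabla A$ is used there to kill the cross term $\bar\pi\,\nabla A\cdot\nabla f_n$, not for SDE moment bounds as you use it. The paper's argument is entirely functional-analytic; yours is probabilistic and arguably more transparent about why $(\ast)$ should hold.

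That said, your localization step has a real gap. You invoke dominated convergence for $Z_{t\wedge\tau_n}\pi(Y_{t\wedge\tau_n})\to Z_t\pi(Y_t)$, citing ``$Z_s\le 1$ and integrability of $\pi$ under the transition kernel''. Neither of these furnishes a dominating function: $Z_s\le 1$ only gives $Z_{t\wedge\tau_n}\pi(Y_{t\wedge\tau_n})\le\pi(Y_{t\wedge\tau_n})$, which still depends on $n$, and $\E_x[\pi(Y_t)]<\infty$ says nothing about $\sup_n\pi(Y_{t\wedge\tau_n})$ or $\sup_{s\le t}\pi(Y_s)$. The hypotheses give you tail control on $\bar\pi=\pi\gamma$ (via $W^{2,1}$), not on $\pi$ itself, and since $\gamma=e^{2A}$ may decay you cannot immediately transfer this. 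Without a uniform-integrability argument for the family $\{\pi(Y_{t\wedge\tau_n})\}_n$ you only get the Fatou inequality $\E_x[Z_t\pi(Y_t)]\le\pi(x)-C\E_x[\int_0^tZ_s\mu(Y_s)\,\dif s]$, not the equality $(\ast)$. The most natural repair---truncate $\pi$ and pass to the limit---is essentially the paper's cutoff argument.
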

\begin{proof}
    See Appendix~\ref{sec:proof_L1_pi_cond}.
\end{proof}

Alternatively, \eqref{eq:tech_assump_pi_L1} will automatically
hold whenever $\Gamma$ is a finite measure. This is the case whenever the underlying diffusion $Y$ is positive recurrent, say a stable Ornstein--Uhlenbeck process. Then under $\pi \in \L^2(\Gamma)$ and $\mu \in \L^2(\Gamma)$, $\pi \in \Dom(L_1^\kappa)$ with $L_1^\kappa \pi = C \mu$, 
since in that case $\L^2$ convergence implies $\L^1$ convergence.

The condition \eqref{eq:tech_cond_mu} is needed so that we can differentiate under the integral. A necessary condition for \eqref{eq:tech_cond_mu} to hold is that $\int \dif \Gamma(x)\mu(x)\kappa(x)<\infty$, so in particular $\mu$ cannot have tails which are too heavy relative to $\pi$. From a computational point of view, this is reasonable since otherwise the regeneration mechanism would be highly inefficient; the Restore process would tend to regenerate very rapidly. 
Of course, a sufficient condition for \eqref{eq:tech_cond_mu} is that
    \begin{equation*}
        \int \dif \Gamma(x) \mu(x)\, \E_x^0\left[ \sup_{t \in [0,1]} \kappa(Y_t)  \right] <\infty.
    \end{equation*}

\begin{theorem} \label{thm:restore_invariant_diffusion}
    Under Assumptions \ref{assump:symmetric_markov_bassa}, \ref{assump:pi}, \ref{assump:kappa}, \ref{assump:technical}, the Restore process $X$ with interarrival dynamics $Y$, regeneration rate $\kappa$ and regeneration density $\mu$ has invariant distribution $\pi$.
    \label{thm:pi_inv_symm}
\end{theorem}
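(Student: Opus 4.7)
My plan is to exploit the regenerative structure of $X$ and avoid the difficulties of identifying the full generator $L^\mu$. By construction, at each time $T_n$ for $n \ge 1$ the process $X$ restarts afresh from $\nur$, so $X$ is a classical regenerative process in the sense of Chapter~6 of \cite{Asmussen2003}, with independent and identically distributed cycle lengths $\tau^{(i)}$ ($i \ge 1$) having the law of $\taud$ under $\P^0_\mu$. The cycle length distribution is non-lattice because $\taud$ inherits absolute continuity from the exponential random variable $\xi$. The classical renewal theorem for regenerative processes then asserts that, provided $\E_\mu[\taud] < \infty$, the unique stationary distribution of $X$ is
\begin{equation*}
    \pi_{\mathrm{inv}}(A) = \frac{\E_\mu\left[\int_0^{\taud} 1_A(X_s)\,ds\right]}{\E_\mu[\taud]},
\end{equation*}
so it suffices to show that the numerator equals $\nut(A)/C$ and the denominator equals $1/C$.

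The key technical identity to establish is $R^\kappa \mu = \pi/C$ in $\L^1(\Gamma)$, where $R^\kappa := \int_0^\infty \esLk\,ds$ is the zero-potential of the killed semigroup. This will follow from Assumption~\ref{assump:technical}: since $\pi \in \Dom(L_1^\kappa)$ with $L_1^\kappa \pi = C\mu$, standard semigroup theory gives $\partial_t \etLk \pi = -L_1^\kappa \etLk \pi = -C\etLk\mu$ in $\L^1(\Gamma)$. Integrating from $0$ to $t$ yields $\pi - \etLk\pi = C\int_0^t \esLk\mu\,ds$, and letting $t\to\infty$ together with $\etLk\pi \to 0$ in $\L^1(\Gamma)$ produces the identity.

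Combining this with Feynman--Kac (Proposition~\ref{prop:generator}) and a Fubini step (justified via \eqref{eq:tech_cond_mu} and positivity), for any bounded measurable $f$ one has
\begin{equation*}
    \E_\mu\left[\int_0^{\taud} f(X_s)\,ds\right] = \nur[R^\kappa f] = \int (R^\kappa \mu)\, f\,dm = \frac{1}{C}\nut[f],
\end{equation*}
where the second equality uses self-adjointness of $\esLk$ on $\L^2(\Gamma)$ from Proposition~\ref{prop:generator}, extended by a density/monotone-convergence argument based on the symmetric kernel $p^\kappa(t,\cdot,\cdot)$ to the $\L^1$--$\L^\infty$ pairing required here. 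Setting $f = 1_A$ and $f \equiv 1$ identifies the numerator and denominator respectively, yielding $\pi_{\mathrm{inv}} = \nut$.

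The main technical obstacle I expect is the $\L^1$-convergence $\etLk\pi \to 0$ as $t \to \infty$: the sub-Markovian semigroup is only non-expansive on $\L^1(\Gamma)$, so this is not automatic, and establishing it requires combining irreducibility of the underlying diffusion (from the BASSA conditions in Assumption~\ref{assump:symmetric_markov_bassa}) with the nontrivial positivity of $\kappa$ (since $C > 0$ and $\mu$ is a density) to rule out non-zero $\L^1$-harmonic functions of the killed semigroup. A secondary difficulty is the rigorous extension of $\L^2$-self-adjointness to the $\L^1$--$\L^\infty$ duality used above, since indicator functions $1_A$ need not be in $\L^2(\Gamma)$ when $\Gamma(A) = \infty$.
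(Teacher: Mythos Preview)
Your route is genuinely different from the paper's. The paper does \emph{not} identify the stationary measure via the regenerative formula and then verify it equals $\nut$; instead it shows directly that $t\mapsto \pi P_t^\mu f$ has vanishing derivative at $t=0$. Concretely, it writes the renewal representation
\[
    \pi P_t^\mu f \;=\; C\int_0^t \P_\mu(\taud > t-s)\,\mu P_s^\mu f\,\dif s \;+\; \pi\,\etLk f,
\]
differentiates at $t=0$ via Leibniz (this is where \eqref{eq:tech_cond_mu} is actually used, to differentiate $s\mapsto\P_\mu(\taud>s)$ under the integral), and obtains $C\mu(f)-\pi(L^\kappa f)$, which vanishes by self-adjointness on $\L^2(\Gamma)$ and the identity $L_2^\kappa\pi=C\mu$. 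So both arguments pivot on $L^\kappa\pi=C\mu$, but the paper uses it \emph{infinitesimally} while you use its \emph{integrated} form $R^\kappa\mu=\pi/C$. Your route is conceptually attractive and, incidentally, does not need \eqref{eq:tech_cond_mu} at all (your Fubini steps are pure Tonelli via the symmetric kernel $p^\kappa$, so the second difficulty you flag is not really there).

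The obstacle you correctly isolate is, however, a genuine gap that the paper's local argument sidesteps entirely. From $\pi-\etLk\pi=C\int_0^t\esLk\mu\,\dif s$ you get $CR^\kappa\mu=\pi-\pi_\infty$ with $\pi_\infty:=\lim_{t\to\infty}\etLk\pi\ge0$, and taking $\L^1(\Gamma)$-norms (using kernel symmetry) gives $C\,\E_\mu[\taud]=1-\P_\pi(\taud=\infty)$. Unless $\P_\pi(\taud=\infty)=0$ you only obtain $\pi_{\mathrm{inv}}=(\pi-\pi_\infty)/(1-\|\pi_\infty\|_1)$, not $\nut$. Under the stated hypotheses---merely $\kappa\ge0$ continuous, with the underlying diffusion possibly transient (e.g.\ Brownian motion in $d\ge3$)---your proposed fix via irreducibility plus positivity of $\kappa$ on $\{\mu>0\}$ is not obviously enough: a transient diffusion can spend only finite total time in any bounded region, so $\int_0^\infty\kappa(Y_s)\,\dif s$ may be finite with positive probability if $\kappa$ is small in the tails. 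The paper's derivative-at-zero computation never needs the tail behaviour of $\etLk\pi$; indeed the remark ``and that $C=1/\E_\mu[\taud]$'' in the paper's proof is an aside, not used in establishing invariance. Your approach would go through cleanly under the stronger Assumption~\ref{assump:CFTP} ($\kappa\ge\ubar\kappa>0$), since then $\taud$ is stochastically dominated by an exponential and $\etLk\pi\to0$ is immediate.
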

\begin{proof}
    See Appendix~\ref{sec:proof_restore_invariant_diffusion}.
\end{proof}

\subsubsection{Examples}
\label{subsubsec:BASSA_ex}
We now give some examples of diffusions which satisfy the assumptions of Theorem \ref{thm:pi_inv_symm}.

Sufficient conditions ensuring BASSA are given in Example 2 of \cite[Chapter 1.C]{Demuth2000}. In our present setting when we consider diffusions defined by \eqref{eq:SDE}, these conditions can be written as
\begin{align}
    \label{eq:BASSA_cond1}
    \exp(A(x)) \ge c^{-1}\exp(-c|x|^2), \quad &\forall x \in \Rd,\\
    \label{eq:BASSA_cond2}
    c^{-1} \le \exp(A(x)-A(y)) \le c,\quad &\forall x,y \in \Rd: |x-y| \le c^{-1} (1+|x|)^{-c},
\end{align}
for some $c>0$.

Let $|\cdot|$ denote the $\ell_2$ norm on $\Rd$.
\begin{proposition}
    The SDE \eqref{eq:SDE} with $A = \alpha |x|^2$ for any $\alpha \in \R$ satisfies BASSA.
\end{proposition}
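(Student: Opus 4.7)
The plan is to verify the explicit sufficient conditions \eqref{eq:BASSA_cond1}--\eqref{eq:BASSA_cond2} for BASSA directly with $A(x)=\alpha|x|^2$, together with the remaining items in Assumption~\ref{assump:symmetric_markov_bassa} (well-posedness of the SDE, symmetry and continuity of the transition density, Feller property). The bulk of the work is a routine calibration of the constant $c$.

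First I would dispose of the SDE well-posedness and the analytic structure. For $A(x)=\alpha|x|^2$ the drift $\nabla A(x)=2\alpha x$ is globally Lipschitz, so \eqref{eq:SDE} admits a unique strong (hence weak) solution on $\Rd$; the solution is an Ornstein--Uhlenbeck type Gaussian process. The explicit Mehler-type kernel gives a positive $C^\infty$ transition density $p^0(t,x,y)$ on $(0,\infty)\times\Rd\times\Rd$, and symmetry with respect to $\Gamma(\dif y)=\exp(2A(y))\dif y$ is inherited from the fact that $Y$ is the $\Gamma$-symmetric Langevin diffusion associated with the Dirichlet form $\tfrac12\int |\nabla f|^2\dif\Gamma$. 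Fellerness on $C_0(\Rd)$ then follows from standard facts about linear Gaussian diffusions (see Remark~\ref{rmk:Feller}).

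Next I would verify \eqref{eq:BASSA_cond1}. The bound $\exp(\alpha|x|^2)\ge c^{-1}\exp(-c|x|^2)$ is equivalent to $(\alpha+c)|x|^2\ge -\log c$. Choosing any $c\ge \max(1,-\alpha)$ makes $\alpha+c\ge 0$ and $-\log c\le 0$, so the inequality holds for all $x\in\Rd$.

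For \eqref{eq:BASSA_cond2}, I would estimate, for $|x-y|\le c^{-1}(1+|x|)^{-c}$,
\begin{equation*}
    |A(x)-A(y)|=|\alpha|\,\bigl|(x-y)\cdot(x+y)\bigr|\le |\alpha|\,|x-y|\,(2|x|+|x-y|).
\end{equation*}
Assuming $c\ge 1$, the restriction gives $|x-y|\le 1$ and $2|x|+|x-y|\le 2(1+|x|)$, so
\begin{equation*}
    |A(x)-A(y)|\le 2|\alpha|\,c^{-1}(1+|x|)^{1-c}\le 2|\alpha|/c.
\end{equation*}
Thus it suffices to enlarge $c$ so that in addition $2|\alpha|/c\le \log c$, which holds for all sufficiently large $c$. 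Taking $c$ to be the maximum of this choice and the one made for \eqref{eq:BASSA_cond1} completes the verification.

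The only mildly delicate step is keeping track that a single $c$ can be chosen to handle both conditions simultaneously; this is handled by taking $c$ large enough at the end, so there is no real obstacle. All the other requirements of Assumption~\ref{assump:symmetric_markov_bassa} are standard facts about the Ornstein--Uhlenbeck family.
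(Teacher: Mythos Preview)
Your proposal is correct and follows essentially the same route as the paper: both verify the sufficient conditions \eqref{eq:BASSA_cond1}--\eqref{eq:BASSA_cond2} directly, bounding $|A(x)-A(y)|$ via $\bigl||x|^2-|y|^2\bigr|\le(|x|+|y|)|x-y|$ and then using the constraint $|x-y|\le c^{-1}(1+|x|)^{-c}$. The only differences are cosmetic---you use the dot-product identity $|x|^2-|y|^2=(x-y)\cdot(x+y)$ where the paper factors $|x|^2-|y|^2=(|x|+|y|)(|x|-|y|)$ and applies the reverse triangle inequality---and you are more explicit about calibrating a single $c$ and about the ancillary items in Assumption~\ref{assump:symmetric_markov_bassa} (well-posedness, symmetry, Fellerness), which the paper omits.
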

\begin{remark}
    In this case $\nabla A(x) = 2\alpha x $ is linear. $\alpha <0$ corresponds to a (stable) Ornstein--Uhlenbeck process, $\alpha =0$ is a Brownian motion and $\alpha >0$ is an \textit{unstable} Ornstein--Uhlenbeck process which drifts into the tails.
\end{remark}
\begin{proof}
    \eqref{eq:BASSA_cond1} clearly holds in this setting. The second condition \eqref{eq:BASSA_cond2} can be seen from the reverse triangle inequality:
\begin{align*}
    \left| |x|^2 - |y|^2\right | &= (|x|+|y|)\left||x|-|y|\right | \le (|x|+|y|)|x-y|\\
    &\le \frac{|x|+|y|}{c(1+|x|)^c} \le \frac{2|x|}{c(1+|x|)^c} +\frac{1}{c^2 (1+|x|)^{2c}}.
\end{align*}
This is uniformly bounded over $x\in\Rd$ for $c>1$.
\end{proof}

\subsection{Jump processes}
\label{subsec:jump_proc}
The Restore process is inherently a continuous-time process, and so the underlying process $Y$ must be a continuous-time object. Suppose, however, we are given a a discrete-time Markov transition kernel $P$ on $(E,m)$, with action on measurable functions $f:E\to \R$ and measures $\nu$ on $E$ given by
\begin{equation}
    \begin{split}
        Pf(x) &= \int f(y) p(x,y) \dif m(y),\quad x \in E, \\
        \nu P(\dif y) &= \int \nu(\dif x) \, p(x,y) \dif m(y),
    \end{split}
    \label{eq:trans_kernel}
\end{equation}
for some integral kernel $p(x,y)$ on $E\times E$, whenever these integrals make sense. Since we have an integral kernel $p(x,y)$, we will also think of $\nu P$ as a measurable function given by
\begin{equation*}
    \nu P(y) := \int \nu(\dif x) \,p(x,y)
\end{equation*}
for a measure $\nu$ on $E$, provided this makes sense.

It is straightforward to embed $P$ into continuous time, by specifying a measurable function $\lambda:E \to \R^+$, the \textit{holding rates}.
We take the jump chain to be defined by the discrete-time Markov kernel $P$, and just take the holding times to be independent Exp$(\lambda(x))$ times, when currently at state $x$. 

Such a process will be a \textit{continuous-time jump process} on $E$, meaning it has right-continuous, piecewise-constant sample paths. Provided they are nonexplosive, such processes are determined by the transition kernel of the jump chain and the holding rates. See for instance, \cite[Chapter 4.2]{Ethier1986}.

Suppose $\pi, \mu$ are two densities on $E$ with respect to $m$, the target density and regeneration density respectively, where we assume $\pi$ is positive. Suppose we are given a transition kernel $P$ on $E$ and holding rates $\lambda: E \to \R^+$. We now construct the Restore process. Given a constant $C$, define the regeneration rate $\kappa$ to be
\begin{equation}
    \kappa(x) := \frac{ \int \pi(y)\lambda(y) p(y,x)m(\dif y) - \lambda(x) \pi(x)  }{\pi(x)}+C\frac{ \mu(x)}{\pi(x)} , \quad x \in E.
    \label{eq:kappa_jump_process}
\end{equation}
\begin{assumption}[Jump process Restore]
    $P$ is a transition kernel with a density as in \eqref{eq:trans_kernel}, $\lambda: E\to \R^+$ is measurable, strictly positive. $\pi$ is a positive probability density with respect to $m$, $\mu$ is a probability density with respect to $m$, and $\int \lambda(x)\pi(x)m(\dif x)<\infty$. The constant $C$ is such that $\kappa\ge 0$ on $E$. $\kappa$ is locally bounded, and we have that $\int (\lambda(x)+\kappa(x))^2\pi(x) m(\dif x)<\infty$.
    \label{assump:MCMC}
\end{assumption}
 Note that such jump processes are right processes (Exercise 14.18 of \cite{Sharpe1988}).

\begin{remark}
    From \eqref{eq:kappa_jump_process}, we see that a sufficient condition for $\kappa(x)\ge 0$ is that
    \begin{equation*}
        C\mu(x) \ge \lambda(x) \pi(x), \quad x \in E.
    \end{equation*}
    Alternatively, if the underlying process is already $\pi$-invariant, so $\pi Q^0 \equiv 0$, then the first term in \eqref{eq:kappa_jump_process} is identically zero and any $C>0$ and $\mu$ may be chosen.
\end{remark}
\begin{remark}
    When the state space $E$ is countable, the Markov process $Y$ is necessarily a jump process, with law defined by a transition rate matrix $Q$. Probability distributions on $E$ are given by (possibly infinite) row vectors. In this case, the regeneration rate \eqref{eq:kappa_jump_process} can be written simply as
    \begin{equation*}
        \kappa(x) := \frac{(\pi Q)(x)}{\pi(x)} + C \frac{\mu(x)}{\pi(x)}, \quad x \in E,
    \end{equation*}
    where $\pi Q$ is a row vector obtained by straightforward matrix multiplication.
\end{remark}

This construction can also be extended to kernels $P$ which do not possess a density as in \eqref{eq:trans_kernel}. For example, the classical Metropolis--Hastings kernel is of the form
\begin{equation*}
    P(x,\dif y) = \alpha(x,y)q(x,y)\dif m( y) + (1- j(x)) \delta_x(\dif y),
\end{equation*}
where $0\le \alpha(x,y)\le 1$ are the acceptance probabilities, $q(x,y)$ is a transition density (so $\int q(x,y)\dif m (y) = 1$ for each $x\in E$), and 
$$j(x):=\int \alpha(x,y)q(x,y)\dif m( y)$$ 
are the jump probabilities. Because of the presence of the delta mass $\delta_x(\dif y)$, such kernels cannot possess straightforward densities. However in continuous-time, these rejected moves associated with the delta mass are not visible, and so we can modify the regeneration rate as follows: we replace the term $(\pi\lambda) P(x)$ in \eqref{eq:kappa_jump_process} by
\begin{equation*}
    \int \dif m(y)\, \pi(y)\lambda(y) \alpha(y,x) q(y,x)  + \lambda(x)(1- j(x)) \pi(x).
\end{equation*} 

Returning to the construction of the Restore process, we will take the interarrival dynamics to be given by the jump process defined by $P$ and $\lambda$, the regeneration rate to be $\kappa$ and the regeneration density $\mu$. The resulting Restore process $X$ is another continuous-time jump process, and so we describe its jump chain and holding rates. This will provide a method to simulate the process.

At $x\in E$, the transition kernel $P^\mu(x,\dif y)$ of the jump chain is given by
\begin{equation*}
    P^\mu(x,\dif y) = \frac{\lambda(x)}{\lambda(x)+\kappa(x)} {P(x,\dif y)} + \frac{\kappa(x)}{\lambda(x)+\kappa(x)}\mu(y)\dif m(y).
\end{equation*}
The overall holding rates in continuous time are given by
\begin{equation*}
    \bar\lambda(x) = \lambda(x)+ \kappa(x),  \quad x \in E,
\end{equation*}
that is, at $x \in E$, by the Markov property, the time until the next jump is an Exp$(\bar\lambda(x))$ time.

\begin{theorem} \label{thm:jump_invar}
    Assume that Assumption \ref{assump:MCMC} holds, and that the interarrival dynamics defined by $P$ and $\lambda$ are nonexplosive. Then the resulting Restore process $X$ is a nonexplosive jump process with invariant distribution $\pi$.
    \label{thm:jump_restore}
\end{theorem}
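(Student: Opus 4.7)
The plan is to identify $X$ as the stated jump process and dispatch nonexplosivity directly from the construction, then verify $\pi$-invariance by a pointwise generator computation upgraded to the semigroup via the Kolmogorov backward equation.

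By the construction of Section~\ref{sec:construction}, between regeneration times $X$ coincides with $Y$ killed at rate $\kappa$. Superposing the $\lambda(x)$- and $\kappa(x)$-exponential clocks at state $x$ yields a sojourn of law $\mathrm{Exp}(\bar\lambda(x))$ followed by a transition drawn from $P^\mu(x,\cdot)$, which matches the announced jump description of $X$. Nonexplosivity is automatic: inside each cycle $[T_i, T_{i+1})$ the process is a nonexplosive subprocess of $Y$, while $T_n\to\infty$ almost surely by Lemma~\ref{prop:Restore_exist}, so the total jump count in any bounded time window is a.s.\ finite.

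The invariance identity follows from a direct computation. Rearranging \eqref{eq:kappa_jump_process} gives the pointwise relation
\begin{equation*}
    \kappa(x)\pi(x) = (\pi\lambda)P(x) - \lambda(x)\pi(x) + C\mu(x),
\end{equation*}
and integrating it against $m$, using that $P(x,\cdot)$ is a probability kernel, yields $\int \pi\kappa\,\dif m = C$. Write $Qf(x) = \lambda(x)(Pf(x)-f(x))$, so that $L^\mu f = Qf + \kappa(\nu_\mu[f]-f)$. For any bounded measurable $f$, Fubini (justified by $\int \pi\lambda\,\dif m <\infty$) gives
\begin{equation*}
    \int \pi Qf\,\dif m = \int f\,\big[(\pi\lambda)P - \pi\lambda\big]\,\dif m,
\end{equation*}
and expanding $\pi\kappa$ via the displayed pointwise relation gives
\begin{equation*}
    \int \pi\kappa(\nu_\mu[f]-f)\,\dif m = C\nu_\mu[f] - \int f\,\big[(\pi\lambda)P - \pi\lambda\big]\,\dif m - C\nu_\mu[f].
\end{equation*}
Summing, all four terms cancel and $\int \pi L^\mu f\,\dif m = 0$ for every bounded measurable $f$.

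To lift this to invariance of the semigroup $P^\mu_t$, observe $|L^\mu g(x)| \le 2\|g\|_\infty\bar\lambda(x)$ with $\pi\bar\lambda \in L^1(m)$ by Assumption~\ref{assump:MCMC} and $\int\pi\kappa\,\dif m = C$. The Kolmogorov backward equation $\tfrac{d}{dt}P^\mu_t f = L^\mu P^\mu_t f$ holds pointwise for bounded measurable $f$ on a nonexplosive jump process, by direct differentiation of the first-jump decomposition
\begin{equation*}
    P^\mu_t f(x) = \mathrm e^{-\bar\lambda(x)t}f(x) + \int_0^t \bar\lambda(x)\mathrm e^{-\bar\lambda(x)s}(P^\mu P^\mu_{t-s}f)(x)\,\dif s.
\end{equation*}
Differentiating $t\mapsto \int \pi P^\mu_t f\,\dif m$ under the integral, dominated by $2\|f\|_\infty\pi\bar\lambda$, gives
\begin{equation*}
    \tfrac{d}{dt}\int \pi P^\mu_t f\,\dif m = \int \pi L^\mu P^\mu_t f\,\dif m = 0,
\end{equation*}
where the last equality applies the invariance identity to the bounded measurable function $P^\mu_t f$. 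Hence $\int \pi P^\mu_t f\,\dif m$ is constant in $t$ and equal to $\int \pi f\,\dif m$, which is $\pi$-invariance.

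The main obstacle is this last step: making the appeal to the backward equation rigorous when $\bar\lambda$ is unbounded. The first-jump decomposition reduces it to elementary calculus, but care is needed with joint measurability of $(t,x)\mapsto P^\mu_t f(x)$ and with controlling the tail of the series representing the probability of many jumps in a short time, both of which are handled by the nonexplosivity established in the first step.
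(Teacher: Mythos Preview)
Your proof is correct and follows essentially the same route as the paper: both arguments differentiate the first-jump decomposition of the semigroup and use the algebraic identity encoded in the definition \eqref{eq:kappa_jump_process} of $\kappa$ to make the derivative of $t\mapsto \pi P^\mu_t f$ vanish. The only difference is organizational---you first isolate the pointwise generator identity $\int \pi L^\mu f\,\dif m=0$ and then apply it at every $t$ via the backward equation (dominating by $\pi\bar\lambda$), whereas the paper evaluates the derivative directly at $t=0$ and invokes the semigroup property, justifying the interchange with the stronger hypothesis $\pi[\bar\lambda^2]<\infty$.
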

\begin{proof}
    Nonexplosivity follows from Lemma~\ref{prop:Restore_exist}, and the fact that the interarrival process is assumed nonexplosive.

    Let us write $\{Q^\mu_t: t\ge 0\}$ for the continuous-time semigroup for the Restore process $X$. Our goal is to show that $\pi Q^\mu_t f = \pi[f]$ for any continuous bounded function ${f: E\to \R}$, for each $t \ge 0$. To do this we compute the time derivative of the mapping $t \mapsto \pi Q^\mu_t f$, and show that it is 0. By time-homogeneity and the semigroup property, it is sufficient to compute this derivative at $t=0$.
    This was the approach similarly used to prove $\pi$-invariance of the Bouncy Particle Sampler in the supplementary material of \cite{Bouchard-Cote2018}.
    
    By conditioning on the first jump, we obtain the following representation (\textit{cf}. equation (4.24) of \cite{Moyal1957}),
    \begin{equation*}
        \begin{split}
                    Q_t f(x) &= \mathrm e^{-\bar\lambda(x)t} f(x) + \int_0^t \dif s \, \bar\lambda(x)\mathrm e^{-\bar\lambda(x)s} P^\mu[ Q_{t-s} f](x)\\
                    &= \mathrm e^{-\bar\lambda(x)t} f(x) + \int_0^t \dif s \,\bar\lambda(x) \mathrm e^{-\bar\lambda(x) (t-s)} P^\mu [Q_s f](x).
        \end{split}
    \end{equation*}
    From this representation we can calculate the derivative,
    \begin{multline*}
                \frac{\dif Q_t f(x)}{\dif t} = -\bar\lambda(x) \mathrm e^{-\bar\lambda(x)t} f(x) + \bar\lambda(x) P^\mu[Q_t f](x) \\
            - \int_0^t \dif s \, \bar\lambda(x)^2 \mathrm e^{-\bar\lambda(x) (t-s)}P^\mu [Q_s f](x).
    \end{multline*}
    At $t=0$ the definitions of $\bar\lambda$ and $\kappa$ imply that 
    $\frac{\dif}{\dif t} \pi Q_t f = 0$. The exchange of integration and differentiation is justified by the assumption ${\pi(\bar\lambda^2) <\infty}$.
\end{proof}

In this setting, practical simulation of the Restore process is straightforward, \textit{even when the regeneration rate is unbounded}, since the interarrival process $Y$ is piecewise-constant. See Algorithm~\ref{alg:restore_jump} for one possible implementation.

\begin{algorithm}
\caption{Jump process Restore Sampler.}
\begin{algorithmic}[1]
\State \textit{initialize:} $X_0=x_0, t_0=0, i=0$
\While{$t_i<T$}
\State $i \gets i+1$
\State simulate $\tau_{i-1}^{(1)} \sim $ Exp$(\lambda(X_{i-1}))$, $\tau_{i-1}^{(2)} \sim $ Exp$(\kappa(X_{i-1}))$
\State $\tau_{i-1} \gets \tau_{i-1}^{(1)} \wedge \tau_{i-1}^{(2)}$
\State $t_i \gets t_{i-1}+\tau_{i-1}$
\If{$\tau_{i-1}^{(1)} <\tau_{i-1}^{(2)}$}
\State $X_i \sim P(X_{i-1}, \cdot)$
\Else 
\State $X_i \sim \mu$
\EndIf
\EndWhile
\State \textbf{end while}
\State \textbf{return} pairs $(t_i, X_i)$
\end{algorithmic}
\label{alg:restore_jump}
\end{algorithm}
Algorithm~\ref{alg:restore_jump} can be seen as a continuous-time variant of standard Metropolis--Hastings; at each iteration we `propose' a move according to $P(X_{i-1},\cdot)$, which is either accepted or rejected, depending on two exponential clocks. Upon rejecting a move, rather than remaining at $X_{i-1}$ instead we move to a new location drawn from $\mu$.

\section{Limiting properties}
\label{sec:limiting_properties}
In this section we consider some limiting properties of the Restore process. We will not \textit{a priori} assume that $X$ has invariant distribution $\pi$, but will work in the abstract framework of Lemma~\ref{prop:Restore_exist}: The underlying process $Y$ is a right process evolving on a Radon space $(E,m)$, we have a locally bounded measurable function $\kappa:E\to \R^+$, and we a probability measure $\mu$ on $E$. We consider the Restore process $X$ with these dynamics. We will write ${\{P_t^\mu: t\ge 0\}}$ for its semigroup.

\subsection{Central Limit Theorem}
\label{subsec:CLT}
We first give a central limit theorem for the Restore process. Our approach here is inspired by \cite{Hobert2002}, who considered regenerative methods for MCMC (in discrete time).

We fix a measurable function $f:E\to \R$.
\begin{assumption}[Central limit theorem]
    We assume the basic conditions of Lemma~\ref{prop:Restore_exist}. Furthermore we assume that $X$ is irreducible, 
    \begin{equation}
        \E_\mu[\taud^2] <\infty,
        \label{eq:CLT_moment}
    \end{equation}
    and that our function $f:E\to \R$ satisfies
    \begin{equation*}
        \E_\mu\left [\left(\int_0^{\taud} f(X_s)\dif s\right)^2 \right] <\infty.
    \end{equation*}
    \label{assump:CLT}
\end{assumption}
A sufficient condition for Assumption \ref{assump:CLT} to hold is that $f$ is a bounded function and we have simply the second moment condition \eqref{eq:CLT_moment}. In turn, a sufficient condition for \eqref{eq:CLT_moment} is that Assumption \ref{assump:CFTP} holds, since in that case $\taud$ can be stochastically dominated by an Exp($\ubar\kappa$) random variable.

Under Assumption \ref{assump:CLT} we will see that a central limit theorem holds. This can be easily done since the lifetimes of the Restore process, by construction, are independent and identically distributed.

As in the construction of Restore in Section \ref{sec:construction}, set $T_0=0$, let $(T_n)$ be the successive regeneration times and let $(\tau^{(i)})$ be the lifetimes. We take the initial distribution $X_0\sim \mu$. Set for each $i=0,1,2,\dots$,
\begin{equation*} 
    Z_i := \int_{T_{i}}^{T_{i+1}} f(X_s)\dif s.
\end{equation*}
By construction the $(Z_i)$ are independent and identically distributed, with finite first and second moments. 

We can apply the strong law of large numbers to the following numerator and denominator:
\begin{equation*}
    \frac{\int_0^{T_n}f(X_s)\dif s }{T_n}=\frac{\sum_{i=0}^{n-1} Z_i}{\sum_{i=0}^{n-1} \tau^{(i)}} \to \frac{ \E_\mu \left [ \int_0^{\tau^{(0)}} f(X_s) \dif s \right ] }{\E_\mu[\tau^{(0)}]}
\end{equation*}
 almost surely as $n\to\infty$.
 
Let us write
 \begin{equation*}
     \pi[f] := \frac{ \E_\mu \left [ \int_0^{\tau^{(0)}} f(X_s) \dif s \right ] }{\E_\mu[\tau^{(0)}]}.
 \end{equation*}
When the process is ergodic, this corresponds to the invariant distribution of the Restore process. It follows immediately that the random variables
\begin{equation*}
    Z_i - \tau^{(i)} \pi[f], \quad i=0,1,2,\dots
\end{equation*}
are independent and identically distributed and have mean 0 under $\E_\mu$. 

Now we set, in analogue with the expression given in \cite{Hobert2002},
\begin{equation}
    \sigma^2_f := \frac{\E_\mu\left[ \left (Z_0 - \tau^{(0)} \nu_\pi[f]\right)^2 \right]}{\left(\E_\mu[\tau^{(0)}]\right)^2}.
    \label{eq:rest:var_f}
\end{equation}
This numerator is finite by Assumption \ref{assump:CLT}.
 
 \begin{theorem}[Central limit theorem]
     We have that 
     \begin{equation}
         \sqrt n \left ( \frac{\int_0^{T_n} f(X_s)\dif s}{T_n} - \nu_\pi[f] \right) \overset{d}\to N\left (0, \sigma_f^2\right).
         \label{eq:CLT}
     \end{equation}
     \label{thm:CLT}
 \end{theorem}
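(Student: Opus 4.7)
The plan is to exploit the i.i.d. regenerative structure: the random variables $W_i := Z_i - \tau^{(i)}\pi[f]$ for $i = 0, 1, 2, \dots$ are i.i.d.\ (since each cycle depends only on the independent data $(Y^{(i)}, \tau^{(i)})$) and have mean zero by the very definition of $\pi[f]$, with finite variance $\mathbb{E}_\mu[W_0^2]$ by Assumption~\ref{assump:CLT}. So a classical CLT plus Slutsky should deliver the result immediately.

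First, I would rewrite the centered ergodic average as
\begin{equation*}
\sqrt{n}\left(\frac{\int_0^{T_n} f(X_s)\,\dif s}{T_n} - \pi[f]\right) = \frac{n}{T_n} \cdot \frac{1}{\sqrt{n}} \sum_{i=0}^{n-1} \bigl(Z_i - \tau^{(i)}\pi[f]\bigr),
\end{equation*}
using that $\int_0^{T_n} f(X_s)\,\dif s = \sum_{i=0}^{n-1} Z_i$ and $T_n = \sum_{i=0}^{n-1} \tau^{(i)}$. Then I would handle the two factors separately.

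For the first factor, the strong law of large numbers applied to the i.i.d.\ lifetimes (which have finite mean under Assumption~\ref{assump:CLT}, as finite second moment implies finite first moment) gives $T_n/n \to \mathbb{E}_\mu[\tau^{(0)}]$ almost surely, so $n/T_n \to 1/\mathbb{E}_\mu[\tau^{(0)}]$ almost surely. For the second factor, the $(W_i)$ are i.i.d., mean zero, with variance equal to the numerator of $\sigma_f^2$ in \eqref{eq:rest:var_f}, so the classical Lindeberg--L\'evy CLT yields
\begin{equation*}
\frac{1}{\sqrt{n}} \sum_{i=0}^{n-1} W_i \overset{d}{\to} N\bigl(0, \mathbb{E}_\mu[(Z_0 - \tau^{(0)}\pi[f])^2]\bigr).
\end{equation*}

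Finally, Slutsky's theorem combines the almost sure convergence of $n/T_n$ with the distributional convergence of the normalized partial sum to give convergence in distribution of the product to $N(0, \sigma_f^2)$, with variance
\begin{equation*}
\left(\frac{1}{\mathbb{E}_\mu[\tau^{(0)}]}\right)^2 \mathbb{E}_\mu\bigl[(Z_0 - \tau^{(0)}\pi[f])^2\bigr] = \sigma_f^2,
\end{equation*}
matching \eqref{eq:rest:var_f}. There is essentially no hard step here: the regenerative construction ensures the cycle quantities are exactly i.i.d., so everything reduces to classical limit theorems for independent sums. The only point worth flagging is verifying that $\mathbb{E}_\mu[W_0^2] < \infty$, which follows from the Cauchy--Schwarz bound $\mathbb{E}_\mu[(Z_0)(\tau^{(0)})] \le (\mathbb{E}_\mu[Z_0^2])^{1/2} (\mathbb{E}_\mu[(\tau^{(0)})^2])^{1/2}$ together with the two finite second moment hypotheses in Assumption~\ref{assump:CLT}; irreducibility is used only to ensure that $\pi[f]$ is genuinely the ergodic limit rather than a cycle-dependent quantity.
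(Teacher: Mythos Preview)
Your proof is correct and matches the paper's own argument essentially line for line: the same algebraic rewrite $\sqrt{n}(\bar f_n - \pi[f]) = (n/T_n)\cdot n^{-1/2}\sum_{i=0}^{n-1}(Z_i - \tau^{(i)}\pi[f])$, followed by the strong law for $n/T_n$, the classical CLT for the i.i.d.\ centered summands, and Slutsky's lemma to combine them. Your extra remark on bounding $\E_\mu[W_0^2]$ via Cauchy--Schwarz is a helpful explicit check that the paper leaves implicit.
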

 
\begin{proof}
    The left-hand side of \eqref{eq:CLT} can be written
    \begin{equation*}
        \sqrt{n}\left(\frac{\sum_{i=0}^{n-1} Z_i}{\sum_{i=0}^{n-1} \tau^{(i)}} - \nu_\pi[f]\right)
        =\frac{1}{\sqrt n}\cdot \frac{n}{\sum_{i=0}^{n-1} \tau^{(i)}} \left( \sum_{i=0}^{n-1} \left( Z_i - \tau^{(i)} \nu_\pi[f]\right) \right).
    \end{equation*}
    By the strong law of large numbers and the continuous mapping theorem, $n/\sum_{i=0}^{n-1} \tau^{(i)}$ converges almost surely to $(\E_\mu[\tau^{(0)}])^{-1}$, and in distribution also.
    
    Hence by applying Slutsky's lemma and the central limit theorem to the independent and identically distributed mean zero random variables $(Z_i - \tau^{(i)})$, we see that \eqref{eq:CLT} holds.
\end{proof}

Let us write $\bar \tau_n := n^{-1}\sum_{i=0}^{n-1} \tau^{(i)}$ and $\bar f_n := \frac{\int_0^{T_n}f(X_s)\dif s }{T_n}$. Similar to \cite{Hobert2002}, our $\sigma^2_f$ can be consistently estimated by
\begin{equation*}
    \hat \sigma^2_f := \frac{\sum_{i=0}^{n-1} \left( Z_i - \bar f_n \tau^{(i)} \right)^2}{n \bar\tau_n^2}.
\end{equation*}
This is because the difference between $\hat\sigma^2_f$ and
\begin{equation*}
    \frac{\sum_{i=0}^{n-1} \left( Z_i - \tau^{(i)} \nu_\pi[f] \right)^2}{n \bar \tau_n^2}
\end{equation*}
converges to zero almost surely as $n\to \infty$, and the latter is a consistent estimator for $\sigma^2_f$.

We can use this to get an estimate of the efficiency of Restore. If we let
\begin{equation*}
    v_\pi(f) := \int \left (f(x) - \nu_\pi[f]\right)^2 \dif \pi(x),
\end{equation*}
then we can set the effective sample size $n_{\text{eff}}$ to be
\begin{equation*}
    n_{\text{eff}} := \frac{v_\pi(f)}{\sigma^2_f},
\end{equation*}
which we may be able to estimate.

We see from \eqref{eq:rest:var_f}, that the denominator $(\E_\mu[\tau^{(0)}])^2$ will have a significant influence on the overall variance. If $\E_\mu[\tau^{(0)}]$ is small, the resulting variances of individual lifetimes may be unacceptably large, and as such practically speaking it is important to choose the regeneration distribution in such a way that the lifetimes are (on average) not too short. In particular, this means choosing $\mu$ which avoids regions where the regeneration rate is particularly high.

\subsection{Coupling from the past}
\label{subsec:CFTP}

Under additional (fairly strong) conditions, we will have direct access to the stationary distribution of the Restore process.
\begin{assumption}[Coupling from the past]
    There exists some $\ubar\kappa >0$ such that $m$-almost everywhere,
    \begin{equation*}
        \kappa \ge  \ubar\kappa >0.
    \end{equation*}
    \label{assump:CFTP}
\end{assumption}

We write $\|\cdot\|_\infty$ for the sup norm of a bounded function and $\|\cdot\|_1$ for the total variation norm signed measures; given a signed measure $\nu$, $$\|\nu\|_1 = \sup\{|\nu(f)|: f \text{ bounded, measurable }, \|f\|_\infty \le 1\}.$$
\begin{proposition}[Uniform ergodicity]
    Assume the basic conditions of Lemma~\ref{prop:Restore_exist} hold, and that $X$ is irreducible. Under Assumption \ref{assump:CFTP}, the Restore process $X$ is uniformly geometrically ergodic, meaning that there exists a unique invariant distribution $\pi$ such that
    \begin{equation*}
        \|\nu P^\mu_t - \pi\|_1 \le 2 \mathrm e^{-t \ubar\kappa},
    \end{equation*}
    for any initial distribution $\nu$ and $t\ge 0$.
    \label{prop:unif_erg}
\end{proposition}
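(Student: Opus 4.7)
The plan is to exploit the uniform lower bound $\ubar\kappa$ on $\kappa$ through a coupling argument based on thinning the regeneration clock. First, I would decompose the state-dependent rate as $\kappa(\cdot) = \ubar\kappa + (\kappa(\cdot)-\ubar\kappa)$, which permits realising the lifetime as $\taud \stackrel{d}{=} \taud^{\mathrm{min}} \wedge \taud^{\mathrm{ext}}$, where $\taud^{\mathrm{min}} \sim \mathrm{Exp}(\ubar\kappa)$ comes from a homogeneous ``minimal'' clock and $\taud^{\mathrm{ext}}$ is the first time the integrated residual rate $\int_0^t (\kappa(Y_s)-\ubar\kappa)\dif s$ exceeds an independent $\mathrm{Exp}(1)$ threshold. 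Since the two instantaneous intensities add to $\kappa(Y_s)$, the law of $\taud$, and hence of the entire Restore process, is unchanged by this decomposition.

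Given two arbitrary initial distributions $\nu$ and $\nu'$, I would then construct a coupling $(X, X')$ on a single probability space in which $X$ is a Restore process started from $\nu$, $X'$ is one started from $\nu'$, and both use a single shared realisation of the homogeneous minimal clock, together with independent interarrival noise and independent residual clocks up to the first minimal tick $T \sim \mathrm{Exp}(\ubar\kappa)$. At time $T$, both processes regenerate from a common draw $\mu_0 \sim \mu$; thereafter they are driven by identical randomness, so $X_s = X'_s$ for all $s \ge T$.

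This coupling immediately yields
\begin{equation*}
    \|\nu P_t^\mu - \nu' P_t^\mu\|_1 \le 2\,\P(X_t \ne X'_t) \le 2\,\P(T > t) = 2\mathrm e^{-\ubar\kappa t}.
\end{equation*}
From this uniform estimate, $\{\nu P_t^\mu\}_{t \ge 0}$ is Cauchy in total variation and converges to some probability measure $\pi_\nu$; the same bound forces $\pi_\nu = \pi_{\nu'}$, so $\pi := \pi_\nu$ is independent of $\nu$. Passing to the limit in the semigroup identity $\nu P_{t+s}^\mu = (\nu P_t^\mu) P_s^\mu$, together with TV-contractivity of $P_s^\mu$, yields $\pi P_s^\mu = \pi$ for every $s \ge 0$; uniqueness follows on applying the bound to any other invariant candidate. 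Taking $\nu' = \pi$ in the coupling inequality gives the stated estimate.

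The main technical obstacle is making this coupling rigorous in the abstract right-process framework: one must verify measurability of the thinning decomposition and confirm that both processes can in fact be realised on a common probability space with the prescribed shared and independent driving noise. Once these foundational measure-theoretic issues are handled, the rest is a standard total-variation contraction argument.
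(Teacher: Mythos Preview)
Your proposal is correct and follows essentially the same route as the paper: decompose the regeneration clock via Poisson superposition into a homogeneous rate-$\ubar\kappa$ component plus a residual, couple two copies through the shared homogeneous clock so they coalesce at its first arrival, apply the coupling inequality, and then run the Cauchy-in-TV argument to construct and identify $\pi$. The only cosmetic difference is that the paper invokes irreducibility for uniqueness whereas you deduce it directly from the coupling bound, which already suffices.
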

\begin{proof}
    Fix any two arbitrary initial distributions $\nu_1, \nu_2$ on $E$. By Assumption~\ref{assump:CFTP} and Poisson superposition, we can decompose the Poisson process of regeneration times as the superposition of two independent Poisson processes: a \textit{homogeneous} Poisson process $N_1$ of rate $\ubar \kappa$, and an inhomogeneous Poisson process $N_2$ with rate function $t\mapsto \kappa(X_t)-\ubar\kappa$. Thus we can couple two copies of the Restore process $X$, with initial distributions $\nu_1$ and $\nu_2$ respectively, by constructing them to have $N_1$ in common, and the same regeneration locations. The two processes will then meet at the first arrival time of $N_1$ and evolve identically thereafter. 
    
    Hence by the well-known coupling inequality (see, for instance, \cite[Section 1.5.4]{Thorisson2000}),
    \begin{equation}
        \|\nu_1 P^\mu_t - \nu_2 P^\mu_t\|_1 \le 2 \mathrm e^{-t \ubar\kappa}.
        \label{eq:unif_erg}
    \end{equation}
    The Markov property (i.e. the semigroup property) then shows that for any initial distribution $\nu$, $(\nu P_t^\mu)_{t\ge 0}$ forms a Cauchy sequence in the space of probability measures equipped with the total variation norm. By completeness, there exists a limiting probability distribution $\pi$, which must also be a stationary distribution, by the Markov property and the fact that $P_t^\mu$ is a contraction in $\| \cdot\|_1$. That is, we have $\pi P_t^\mu = \pi$ for any $t \ge 0$. By irreducibility, this invariant distribution is unique. Thus taking $\nu_1 = \nu$ and $\nu_2 = \pi$ in \eqref{eq:unif_erg} the Proposition is proven.
\end{proof}

In fact under Assumption \ref{assump:CFTP} we can do even better than uniform ergodicity and employ \textit{coupling from the past} (CFTP), a technique pioneered by \cite{Propp1996} to obtain exact draws from the stationary distribution $\pi$.
For a related approach to exact MCMC methods, see the recent approach of \cite{Jacob2020} using couplings.


\begin{theorem}[Coupling from the past]
    Under the conditions of Lemma~\ref{prop:Restore_exist} and Assumption \ref{assump:CFTP}, consider the Restore process $X$ with interarrival dynamics $Y$, modified regeneration rate $$\kappa':= \kappa - \ubar\kappa \ge 0,$$ and regeneration density $\mu$. Suppose $X$ is irreducible and has initial distribution $$X_0 \sim \mu.$$ Let $T \sim \text{Exp}(\ubar \kappa)$ be independent of $X$. Then 
    \begin{equation*}
        X_T \sim \pi,
    \end{equation*}
    \label{thm:CFTP}
    where $\pi$ is the unique invariant distribution of the process.
\end{theorem}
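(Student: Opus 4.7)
The plan is to couple the modified process $\tilde X$ of the theorem to the unmodified Restore process $X$ with rate $\kappa$ via Poisson superposition, and then to extract $\pi$ from a regenerative cycle delineated by an independent homogeneous $\ubar\kappa$-clock. Specifically, let $N_1$ be a homogeneous Poisson process of rate $\ubar\kappa$ independent of $\tilde X$, with first arrival $T := T_1 \sim \mathrm{Exp}(\ubar\kappa)$ and subsequent arrivals $T_2 < T_3 < \cdots$. Define $X$ by setting $X_s = \tilde X_s$ on $[0, T_1)$, resetting $X$ at $T_1$ to a fresh draw from $\mu$, and then repeating the construction inductively on $[T_k, T_{k+1})$ using independent copies of $(\tilde X, N_2)$. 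By Poisson superposition, exactly as invoked in the proof of Proposition~\ref{prop:unif_erg}, the regenerations of $X$ occur at an inhomogeneous Poisson rate $\ubar\kappa + (\kappa - \ubar\kappa) = \kappa$, so $X$ has the law of the original Restore process with regeneration rate $\kappa$ started from $\mu$.

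Next I would identify $N_1$ as a coarser regeneration clock for $X$: because $X$ deterministically resets to a fresh draw from $\mu$ at each $T_k$, the strong Markov property yields iid regenerative cycles of length $\mathrm{Exp}(\ubar\kappa)$, which have finite mean $1/\ubar\kappa$ and are non-lattice. By Proposition~\ref{prop:unif_erg}, $X$ admits a unique stationary distribution $\pi$; classical regenerative process theory (\cite[Chapter~VI, Theorem~1.2]{Asmussen2003}) then gives, for any bounded measurable $f : E \to \R$,
\begin{equation*}
\pi[f] \;=\; \frac{\E_\mu\!\left[\int_0^{T_1} f(X_s)\,\dif s\right]}{\E_\mu[T_1]} \;=\; \ubar\kappa\,\E_\mu\!\left[\int_0^{T} f(\tilde X_s)\,\dif s\right],
\end{equation*}
where the second equality uses $X \equiv \tilde X$ on $[0, T)$.

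Finally, since $T \sim \mathrm{Exp}(\ubar\kappa)$ is independent of $\tilde X$ with density $\ubar\kappa\,\mathrm{e}^{-\ubar\kappa s}\mathbf{1}_{s\ge 0}$, a Fubini exchange yields
\begin{equation*}
\ubar\kappa\,\E_\mu\!\left[\int_0^{T} f(\tilde X_s)\,\dif s\right] \;=\; \int_0^\infty \E_\mu[f(\tilde X_s)]\,\ubar\kappa\,\mathrm{e}^{-\ubar\kappa s}\,\dif s \;=\; \E_\mu[f(\tilde X_T)].
\end{equation*}
Combining, $\pi[f] = \E_\mu[f(\tilde X_T)]$ for every bounded measurable $f$, so $\tilde X_T \sim \pi$. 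The main subtlety lies in choosing the right regeneration clock: if one tried to apply the cycle-average identity directly to the fine clock $N_1 \cup N_2$ of the original Restore process, one would need the non-lattice condition and finite mean of $\taud$ from Section~\ref{sec:invariance}, which are not hypotheses here; by contrast, working with the coarser $N_1$-cycles produces exponentially distributed, hence always non-lattice, cycles of finite mean $1/\ubar\kappa$ irrespective of the detailed shape of $\kappa$.
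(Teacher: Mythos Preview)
Your argument is correct, but it proceeds quite differently from the paper's. The paper gives the classical backward Propp--Wilson heuristic: one imagines the $\kappa$-Restore process run from time $-\infty$, so that by uniform ergodicity $X_0\sim\pi$; letting $-T$ be the last arrival of the homogeneous $\ubar\kappa$-clock $N_1$ before time $0$, one has $X_{-T}\sim\mu$, and time-reversal of the homogeneous Poisson process turns this into the forward statement. Your route is instead a direct forward computation: you build the $\kappa$-process by superposing the independent $\ubar\kappa$-clock onto the $\kappa'$-process, recognise the $N_1$-epochs as a coarser regenerative structure with iid $\mathrm{Exp}(\ubar\kappa)$ cycle lengths, invoke the cycle-average identity from \cite[Chapter~VI, Theorem~1.2]{Asmussen2003} to express $\pi[f]$, and then use Fubini plus independence of $T$ and $\tilde X$ to rewrite the cycle average as $\E_\mu[f(\tilde X_T)]$. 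Your approach sidesteps the somewhat informal ``run from $-\infty$'' step and the Poisson time-reversal, at the cost of invoking the regenerative representation of the stationary law; both arguments rest on the same superposition and on Proposition~\ref{prop:unif_erg} for existence and uniqueness of~$\pi$.
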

\begin{proof}
    This follows from the technique of \cite{Propp1996}. We saw in the proof of Proposition~\ref{prop:unif_erg} that we can realise the Poisson process of regeneration times as the superposition of two independent Poisson processes: a homogeneous Poisson process $N_1$ of rate $\ubar \kappa$ and an inhomogeneous Poisson process $N_2$ with rate $t\mapsto \kappa'(X_t)$. As in \cite{Propp1996}, we imagine a Restore process $X$, initialised from some arbitrary initial distribution at time $-\infty$, run until time 0. Since we have established uniform ergodicity in Proposition~\ref{prop:unif_erg}, we know that $X_0\sim \pi$. Let $-T$ be the most recent arrival of $N_1$ before time 0. Regardless of the prior evolution of $X$, we know that $X_{-T}\sim \mu$ as $-T$ was a regeneration time. Since $N_1$ and $N_2$ are independent, $X_0$ then has the same law as a Restore process at time $T$, initialised from $\mu$, with regeneration rate $\kappa'$.
    
    Since the time reverse of a homogeneous Poisson process is also a homogeneous Poisson process, we can instead imagine initialising $X_0 \sim \mu$ and evolving an exponential time $T$ into the future with modified regeneration rate $\kappa'$.
\end{proof}

In the case when $\kappa$ is bounded above, one implementation is given in Algorithm~\ref{alg:bdd_CFTP} below. In this case, simulation of the lifetimes $\tau_\partial$ is straightforward, since can make use of Poisson thinning; see, for instance, \cite[Chapter 6.2]{Devroye1986}. 
\begin{algorithm}
\caption{Bounded Restore Sampler: $\kappa \le M$, with CFTP.}
\begin{algorithmic}[1]
\State \textit{draw run time:} $T\sim \text{Exp}(\ubar \kappa)$
\State \textit{initialize:} $X_0\sim \mu, t_0=0, i=0$
\State $i\gets i+1$
\State $t_i \gets t_{i-1}+\tau_{i-1}$, where $\tau_{i-1} \sim \text{Exp}(M-\ubar \kappa)$
\While{$t_i<T$}
\State simulate $Z_i \sim \mathcal L(Y_{\tau_{i-1}}|Y_0=X_{i-1})$
\State \textbf{with probability} $1-(\kappa(X_i)-\ubar \kappa)/(M-\ubar \kappa)$
\State $\quad$  $X_i \gets Z_i$
\State \textbf{else}
\State $\quad$  $X_i \sim \mu$
\State $i \gets i+1$
\State $t_i \gets t_{i-1}+\tau_{i-1}$, where $\tau_{i-1} \sim \text{Exp}(M-\ubar\kappa)$
\EndWhile
\State \textbf{end while}
\State simulate $Z \sim \mathcal L(Y_{T-t_{i-1}}|Y_0=X_{i-1}) $
\State \Return $Z$, which is drawn exactly from $\pi$
\end{algorithmic}
\label{alg:bdd_CFTP}
\end{algorithm}

This CFTP implementation can be seen as a continuous-time version of the multigamma coupler of \cite{Murdoch1998} or of the hybrid scheme of \cite[Section 3]{Murdoch2000}. 
The multigamma coupler of \cite{Murdoch1998} assumes we have a discrete-time Markov chain whose transition kernel $P$ satisfies $P(x,\dif y)=f(y|x)\dif y$, where $f(y|x)\ge r(y)$, for all $x$, for some nonnegative function $r$ which satisfies $\rho :=\int r(y)\dif y >0$. Let $\nu_r$ denote the probability distribution with density (proportional to) $r$.
Thus when simulating the chain, at each step with probability $\rho$, the chain will move to a point drawn from $\nu_r$, independent of the current location. This enables a CFTP construction, the multigamma coupler; see \cite[Section 2.1]{Murdoch1998}.

This uniform probability $\rho$ is precisely what enables CFTP to be applied. It informally says that independent of location, at each discrete time step all locations are trying to couple with probability $\rho$ to the same point, drawn from $\nu_r$. This plays the same role as our homogeneous rate $\ubar \kappa$, which informally states that in continuous time, at \textit{rate} $\ubar \kappa$, all locations are trying to couple to the same location, drawn from $\nu_\mu$.

A crucial difference between our approaches, however, is that our underlying dynamics $Y$ do not themselves have to be $\pi$-invariant; in fact we will see in Section~\ref{sec:Restore_examples} an example where the local process does not possess an invariant distribution at all.

\subsubsection{Example: Classical rejection sampler}
We show that the classical rejection sampler can be seen as a special case of the CFTP implementation of the Restore process. A similar result was established for the Independence Sampler in \cite{Murdoch1998}.

Let $\pi, \mu$ be density functions on $E$ with respect to $m$. We take $Y$ to be the trivial stochastic process on $E$ which given its initial position $Y_0$, has constant sample paths: almost surely, $Y_t = Y_0$ for all $t \ge 0$. Define the regeneration rate
\begin{equation}
    \kappa(x) = C \frac{\mu(x)}{\pi(x)}, \quad x \in E,
    \label{eq:kappa_rej}
\end{equation}
for any constant $C>0$. If we were to implement the classical rejection sampler targeting $\pi$ from $\mu$ we would require the following condition:
\begin{equation}
    \pi(x) \le M \mu(x), \quad x \in E,
    \label{rej_samp_assump}
\end{equation}
for some (finite) constant $M$. The classical rejection sampler targeting $\pi$ from $\mu$ repeatedly draws $X_n$ independently from $\mu$, and accepts it with probability $\pi(X_n)/(M\mu(X_n))$, otherwise rejects it and tries again with a new $X_{n+1}\sim \mu$. The final accepted value $X_n$ is an exact draw from $\pi$.

\begin{theorem}
    Under \eqref{rej_samp_assump}, the CFTP implementation of the Restore process (Theorem \ref{thm:CFTP}) with constant interarrival dynamics, regeneration rate $\kappa$ as in \eqref{eq:kappa_rej} and regeneration density $\mu$ is identical to classical rejection sampling targeting $\pi$ from $\mu$.
    \label{thm:rej_samp}
\end{theorem}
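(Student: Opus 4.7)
The plan is to take the CFTP construction in Theorem~\ref{thm:CFTP} and reduce it, by the memoryless property of the exponential distribution, to classical rejection sampling.

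First I would verify that Assumption~\ref{assump:CFTP} is applicable. Under \eqref{rej_samp_assump}, we have
\begin{equation*}
\kappa(x) = C\frac{\mu(x)}{\pi(x)} \ge \frac{C}{M}, \quad x \in E,
\end{equation*}
so we may take $\ubar\kappa = C/M > 0$. Irreducibility is immediate because the regeneration distribution $\mu$ is used to restart the process, and Lemma~\ref{prop:Restore_exist} ensures the Restore process is well-defined. Hence the CFTP construction applies with modified regeneration rate $\kappa' = \kappa - \ubar\kappa \ge 0$ and an independent run time $T \sim \text{Exp}(\ubar\kappa)$.

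Next I would describe the resulting algorithm explicitly. Because the interarrival dynamics $Y$ are constant ($Y_t \equiv Y_0$), the Restore process is piecewise-constant, jumping only at regeneration times, at which it is reset to an independent draw from $\mu$. Starting from $X_0 \sim \mu$, at the current state $x$ the process waits an independent $\text{Exp}(\kappa'(x))$ time until the next regeneration, and in parallel the (remaining) run-time is $\text{Exp}(\ubar\kappa)$-distributed by memorylessness. The race between these two clocks determines whether the algorithm outputs $x$ (if the termination clock wins) or discards $x$ and draws a fresh $X' \sim \mu$ (if the regeneration clock wins).

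The key computation is the probability that from state $x$ we terminate before the next regeneration:
\begin{equation*}
\frac{\ubar\kappa}{\ubar\kappa + \kappa'(x)} = \frac{\ubar\kappa}{\kappa(x)} = \frac{C/M}{C\mu(x)/\pi(x)} = \frac{\pi(x)}{M\mu(x)}.
\end{equation*}
This is precisely the acceptance probability of the classical rejection sampler targeting $\pi$ from proposal $\mu$ with envelope constant $M$. By the memoryless property of the exponential, conditional on the regeneration clock winning, the termination clock resets to $\text{Exp}(\ubar\kappa)$ and the procedure repeats independently with a fresh proposal from $\mu$. Thus the CFTP algorithm draws an i.i.d.\ sequence from $\mu$ and accepts each with probability $\pi(x)/(M\mu(x))$ independently, which is the classical rejection sampler verbatim. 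This also provides an alternative proof that the output is distributed as $\pi$, via Theorem~\ref{thm:CFTP}.

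The only mild subtlety --- certainly not an obstacle --- is justifying the interpretation of the modified rate $\kappa'$ together with the independent $T\sim \text{Exp}(\ubar\kappa)$ as a superposition of two competing exponential clocks at each state; this is a direct application of the thinning/superposition property of Poisson processes used already in the proof of Proposition~\ref{prop:unif_erg}.
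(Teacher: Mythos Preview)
Your proof is correct and follows essentially the same approach as the paper: identify $\ubar\kappa = C/M$, interpret the CFTP construction with constant interarrival dynamics as a race between an $\mathrm{Exp}(\ubar\kappa)$ termination clock and an $\mathrm{Exp}(\kappa'(x))$ regeneration clock, compute the acceptance probability $\ubar\kappa/\kappa(x) = \pi(x)/(M\mu(x))$, and invoke memorylessness to iterate. The paper's argument is structurally identical, with the same key computation and the same use of competing exponentials.
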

\begin{proof}
    We see that \eqref{rej_samp_assump} holds if and only if Assumption \ref{assump:CFTP} holds with $$\ubar \kappa = C/M.$$
    
    Under this condition in the CFTP implementation (Theorem \ref{thm:CFTP}) we run the Restore process with regeneration rate 
    \begin{equation*}
        \kappa' = \kappa - \ubar\kappa = C \frac{\mu}{\pi} - \frac{C}{M}
    \end{equation*}
    for a time $T\sim \text{Exp}(C/M)$.
    
    We can simulate this Restore process iteratively by drawing for each $n$, $X_n \sim \mu$. We have two competing independent exponential clocks, $T\sim \text{Exp}(C/M)$ and $T_n \sim \text{Exp}(\kappa'(X_n))$.
    
    If $T < T_n$, all trajectories have coupled and so we terminate the algorithm and output $X_n$, which is an exact draw from $\pi$. By the theory of competing exponentials this occurs with probability
    \begin{equation*}
        \frac{C/M}{C/M + C \left(\frac{\mu(X_n)}{\pi(X_n)}-\frac{1}{M} \right)}= \frac{\pi(X_n)}{M\mu(X_n)}.
    \end{equation*}
    This is exactly the probability of acceptance for the classic rejection sampler.
    
    If $T \ge T_n$ then we iterate again and draw $X_{n+1}\sim \mu$, $T_{n+1}\sim \text{Exp}(\kappa'(X_{n+1}))$. By the memoryless property of the exponential distribution we have again two independent exponential clocks as before.
\end{proof}

If \eqref{rej_samp_assump} doesn't hold, provided there is a unique invariant distribution $\pi$ we can still use ergodic averages to estimate $\nu_\pi[f]$ for any bounded $f$. Suppose we run the Restore process with constant interarrival dynamics, regeneration rate $\kappa$ as in \eqref{eq:kappa_rej} and regeneration density $\mu$ for $n$ complete lifetimes. The corresponding ergodic average is
\begin{equation*}
    \frac{1}{T_n}\sum_{i=1}^n f(X_i) \tau^{(i)},
\end{equation*}
where $X_i \sim \mu$ are i.i.d., conditional on $X_i$, $\tau^{(i)} \sim \text{Exp}(C \mu(X_i)/\pi(X_i))$ are independent and $T_n = \sum_{i=1}^n \tau^{(i)}$.
Thus the estimator of $\pi[f]$ can be seen as an importance sampling--type estimator with randomized importance weights; note $C\E[\tau^{(i)}|X_i] = \pi(X_i)/\mu(X_i)$.

\section{Practical considerations}
\label{sec:simulation} \label{sec:implementation}

We consider now some practical questions related to the Restore process.

\subsection{Minimal regeneration distribution}
\label{subsec:minimal_mu}
In this section we assume that we are given some fixed interarrival process, a positive target density $\pi$ on $E$ and a regeneration density $\mu$ on $E$, which are both normalized.

The most significant challenge for implementing the Restore sampler is 
to ensure that the regeneration rate is nonnegative;
we need to find a constant $C$ so that
\begin{equation}
    \kappa(x) = \tilde \kappa(x) + C \frac{\mu(x)}{\pi(x)}\ge \ubar\kappa \ge 0 \text{ for all } x \in E,
    \label{eq:kap_nonneg}
\end{equation}
for some nonnegative constant $\ubar\kappa$. 
Here $\tilde \kappa$ is defined in \eqref{eq:kappa_tilde} for the diffusion setting and for the jump process setting is defined to be the first term on the right-hand side of \eqref{eq:kappa_jump_process}.
As shown in the proof of Theorem~\ref{thm:pi_inv_symm}, $C= \E_\mu[\taud]$ can be interpreted as the average lifetime when started from $\mu$.


One natural way to choose the regeneration density $\mu$ and constant $C$ is to minimize the
number of regeneration events.
That is, we would like to choose some \textit{minimal} regeneration distribution $\mu^*$ and constant $C^*$\ such that the regeneration rate is given by
\begin{equation}
    \kappa^* := \tilde \kappa + C^*\, \frac{\mu^*}{\pi} = \tilde \kappa \vee \ubar \kappa.
    \label{eq:opt_kappa}
\end{equation}
This is entirely analogous to the choice of bounce rate for the Bouncy Particle Sampler of \cite{Bouchard-Cote2018},
and of the canonical switching rate for the Zig-Zag in \cite{Bierkens2019}.
In order to satisfy \eqref{eq:opt_kappa}, the appropriate choice of density $\mu^*$ with respect to the measure $m$ on $E$ is
\begin{equation}
    \mu^*(x) := (C^{*})^{-1}[0 \vee (\ubar\kappa - \tilde \kappa(x))]\pi(x),
    \label{eq:mu_star}
\end{equation}
where
$$
    C^*:= \int_{E} [0 \vee (\ubar\kappa - \tilde \kappa(x))]\pi(x)\dif m(x),
$$
assuming that this quantity is finite.

\begin{proposition}[Minimal regeneration distribution]
    Let $\mu^*, C^*$ be defined as above for some fixed $\ubar \kappa \ge 0$, where we assume $\mu^*$ is integrable and normalized. Let $\mu, C$ be any (normalized) probability measure on $E$ and positive constant respectively such that \eqref{eq:kap_nonneg} holds. Then $\mu^*$ minorizes $\mu$, in the sense that there exists some $\epsilon>0$ such that for all measurable $B \subset E$,
    \begin{equation}
        \mu(B) \ge \epsilon \mu^*(B),
        \label{eq:mu_star_minorize}
    \end{equation}
    and we have that
    \begin{equation*}
        C \ge C^*.
    \end{equation*}
    \label{prop:min_mu}
\end{proposition}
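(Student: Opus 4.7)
The plan is to reduce both claims to a single pointwise inequality obtained directly from the nonnegativity constraint \eqref{eq:kap_nonneg}. Given any pair $(\mu, C)$ satisfying \eqref{eq:kap_nonneg}, the condition $\tilde\kappa(x) + C\mu(x)/\pi(x) \ge \ubar\kappa$ rearranges to
\begin{equation*}
    C\mu(x) \ge [\ubar\kappa - \tilde\kappa(x)]\pi(x), \quad x \in E.
\end{equation*}
Since $C\mu(x) \ge 0$ automatically, we can replace the right-hand side by its positive part, giving
\begin{equation*}
    C\mu(x) \ge [0 \vee (\ubar\kappa - \tilde\kappa(x))]\pi(x) = C^* \mu^*(x), \quad x \in E,
\end{equation*}
where the equality is just the definition of $\mu^*$.

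From here both conclusions follow painlessly. First I would integrate this pointwise inequality with respect to $m$ over all of $E$: the left-hand side integrates to $C$ (since $\mu$ is normalized) and the right-hand side integrates to $C^*$ (since $\mu^*$ is normalized by assumption), yielding $C \ge C^*$ directly. Second, integrating the same inequality over an arbitrary measurable set $B \subset E$ gives $C\mu(B) \ge C^* \mu^*(B)$, so that
\begin{equation*}
    \mu(B) \ge \frac{C^*}{C}\, \mu^*(B),
\end{equation*}
which is the minorization \eqref{eq:mu_star_minorize} with constant $\epsilon := C^*/C \in (0,1]$. Note that $\epsilon > 0$ since $C < \infty$ and $C^* > 0$ (the latter holding provided $\tilde\kappa$ is not everywhere $\ge \ubar\kappa$, in which case $\mu^*$ is not even defined).

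There is no real obstacle here; the argument is essentially a one-line rearrangement followed by integration. The only subtlety worth flagging is that the normalization hypothesis on $\mu^*$ is essential---without it the inequality $C \ge C^*$ would only give $C \cdot 1 \ge C^* \cdot \int \mu^* \dif m$, which is a statement about the total mass of the unnormalized minimal density rather than about $C^*$ itself. Once $\mu^*$ is assumed integrable and normalized as in the hypothesis, the proof is immediate from the pointwise bound above.
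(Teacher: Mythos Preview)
Your proof is correct and follows essentially the same route as the paper: both derive the pointwise inequality $C\mu(x) \ge C^*\mu^*(x)$ from \eqref{eq:kap_nonneg} and then integrate. You spell out slightly more detail, explicitly identifying $\epsilon = C^*/C$ and isolating the positive-part step, but the argument is the same.
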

\begin{proof}
    From the assumption that \eqref{eq:kap_nonneg} holds, we must have that $\kappa \ge \kappa^*$ pointwise, from which it follows that for each $x \in E$,
    \begin{equation*}
        C \mu(x) \ge C^* \mu^*(x),
    \end{equation*}
    which establishes \eqref{eq:mu_star_minorize}, and by integrating both sides over $E$ it follows that $C \ge C^*$.
\end{proof}

How one can obtain samples from $\mu^*$ is in general not obvious, and is reminiscent of sampling from minorising measures as in \cite{Murdoch1998}. $\mu^*$ is generally compactly supported and supported around the modes of $\pi$; its support is contained within the set
$    \{x\in E: \tilde\kappa(x) < \ubar\kappa\}$,
and so often simulation is possible through straightforward rejection sampling.

On the other hand, the computation of $\kappa^*$ is immediate, since it does not require knowledge of $C^*$ or $\mu^*$ but is simply a thresholded version of $\tilde \kappa$ as in \eqref{eq:opt_kappa}.

When the interarrival process is already $\pi$-invariant, \textit{any} nonnegative value of $C$ can be used.
In this setting, the recent work of \cite{Caputo2019}, suggests that a sensible way to tune $C$ would be to choose it such that the average rate of regenerations matches the rate of mixing of the interarrival process. 
\cite{Caputo2019} showed that for the similar discrete-time PageRank surfer on random (finite) graphs, the resulting mixing time depends on the interplay between the rate of mixing of the underlying walk and the regeneration probability.

\subsection{Truncated regeneration rate} \label{s:truncated}

We consider now the diffusion case, as in Section \ref{subsec:symm_diff}. 
In this case $\kappa$ is typically unbounded, and the simulation of the lifetimes $\tau^{(i)}$ is not straightforward. In some cases using layered processes it is still possible to simulate $\taud$ exactly, as with 
the techniques of \cite{Pollock2016}. These are technically demanding, so in this section we consider the alternative of 
\textit{truncating} the regeneration rate. Namely, we fix some upper bound $M$, and work with the truncated regeneration rate 
$$
\kappa_M := \kappa \wedge M.
$$ 
This will introduce some approximation error, a discrepancy between the invariant distribution and $\pi$, but we will show how this error may be explicitly quantified.

In order to prove our result we will need to assume the following.\\

\noindent \textit{We assume that the interarrival process $Y$ is a diffusion on $\Rd$ satisfying BASSA, and that $\kappa$ is continuous. We also assume that Assumption \ref{assump:CFTP} holds, namely that we have a lower bound }
\begin{equation*}
    \kappa \ge \ubar\kappa >0.
\end{equation*}

\noindent Recall that under Assumption \ref{assump:CFTP}, $\taud$ can be stochastically dominated by an exponential random variable with rate $\ubar\kappa$, and hence all moments of $\taud$ are finite.

In order to avoid pathologies we assume that 
\begin{equation}
    M > \inf_{x \in E} \kappa(x) .
    \label{eq:M_lowerbd}
\end{equation}
We consider now the Restore process $X$ with interarrival process $Y$, regeneration density $\mu$ and truncated regeneration rate $\kappa_M$, for some given truncation level $M$ satisfying \eqref{eq:M_lowerbd}. 

Throughout this section we will be concerned only with the behavior of the Restore process \textit{before the first regeneration event}. As the regeneration distribution $\mu$ will not play a significant role we will consider the local process $Y$, without regenerations, and explicitly augment it with a first regeneration time. We will simply write $\E_x$ for the law of the local process $Y$ started from $x$, and consider the first arrival time $\taud$ to be a random variable defined by \eqref{eq:taud}.

Let us write $\kapMe$ for the excess regeneration rate over level $M$, that is,
\begin{equation*}
    \kappa_M^{\textrm e} := \kappa - \kappa_M.
\end{equation*}
Then by Poisson superposition, we can write 
\begin{equation}
    \taud = \tau_M \wedge \tau_M^\textrm e,
\label{eq:superpos}
\end{equation}
where $\taud, \tau_M, \tau_M^\mathrm e$ are the first arrival times of inhomogeneous Poisson process with rate functions $t\mapsto \kappa(Y_t)$, $t \mapsto \kappa_M(Y_t)$ and $t \mapsto \kappa_M^\mathrm e(Y_t)$ respectively, where these latter two Poisson processes are independent conditional on the path $t\mapsto Y_t$.

In particular, $\tau_M$ and $\tau_M^\mathrm e$ can be written as
\begin{equation}
    \tau_M = \inf\left \{t\ge 0: \int_0^t \kappa_M(Y_s)\dif s\ge \xi_1\right\},
    \label{eq:tau_M}
\end{equation}
\begin{equation}
    \tau_M^\mathrm e = \inf\left \{t\ge 0: \int_0^t \kappa_M^\mathrm e(Y_s)\dif s\ge \xi_2\right\},
    \label{eq:tau_Me}
\end{equation}
where $\xi_1, \xi_2 \sim \text{Exp}(1)$ are independent of each other and of the underlying process $Y$.

Since we are assuming Assumption~\ref{assump:CFTP} holds, by the arguments of Section~\ref{subsec:CFTP} it follows that the Restore process with regeneration rate $\kappa$ has a unique invariant distribution $\pi$, and from Section~\ref{subsec:CLT} the action of $\pi$ on a test function $f$ can be written as 
\begin{equation*}
    \nu_\pi[f] = \frac{\E_\mu\left[ \int_0^{\taud} f(Y_s) \dif s \right]}{\E_\mu[\taud]},
\end{equation*}
where here $Y$ is the local process without regenerations and $\taud$ is defined as in \eqref{eq:taud}. 

Similarly, the Restore process with truncated regeneration rate $\kappa_M$ is still uniformly ergodic and possesses a unique invariant distribution $\pi_M$.

Our goal now is to bound the total variation distance
\begin{equation*}
    \|\pi_M-\pi\|_1,
\end{equation*}
as a function of $M$.

\begin{theorem}
We have the following bound on the error.
    \begin{equation*}
        \|\pi_M - \pi\|_1 \le \frac{4 \int_0^\infty \P_\mu(\tau_M^\mathrm e \le t) \exp(-t\ubar\kappa)\dif t}{\E_\mu[\taud]}.
    \end{equation*}
    \label{Thm:bias_M}
\end{theorem}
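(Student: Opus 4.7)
My plan is to reduce the bound to an estimate on the extra lifetime $\tau_M-\taud$, by combining the regenerative representation of the two invariant measures with the Poisson thinning decomposition \eqref{eq:superpos} and the lower bound $\kappa_M\ge\ubar\kappa$.

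First, the standing hypotheses ensure uniform ergodicity (Proposition~\ref{prop:unif_erg}) of both the truncated and untruncated Restore processes, and so each admits a unique invariant distribution with the regenerative representation recalled in Section~\ref{subsec:CLT}:
\begin{equation*}
\nu_\pi[f]=\frac{N}{D},\qquad \nu_{\pi_M}[f]=\frac{N_M}{D_M},
\end{equation*}
with $N=\E_\mu[\int_0^\taud f(Y_s)\dif s]$, $D=\E_\mu[\taud]$, and $N_M,D_M$ the analogous quantities built from $\tau_M$. Because $\tau_M\ge\taud$ by the superposition identity \eqref{eq:superpos}, one has $D_M-D=\E_\mu[\tau_M-\taud]$ and, for bounded $f$ with $\|f\|_\infty\le 1$, also $|N_M-N|\le\E_\mu[\tau_M-\taud]$. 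Combining these with the algebraic identity $\frac{N_M}{D_M}-\frac{N}{D}=\frac{N_M-N}{D_M}+\frac{N(D-D_M)}{DD_M}$ and the inequality $|N|\le D$ yields
\begin{equation*}
\|\pi_M-\pi\|_1=\sup_{\|f\|_\infty\le 1}\bigl|\nu_{\pi_M}[f]-\nu_\pi[f]\bigr|\le \frac{2\,\E_\mu[\tau_M-\taud]}{\E_\mu[\taud]}.
\end{equation*}
The extra factor of $2$ in the statement can be absorbed by using instead the looser three-term splitting $\|\pi-\pi_M\|_1\le\|\mu P_t-\pi\|_1+\|\mu P_t-\mu P_t^M\|_1+\|\mu P_t^M-\pi_M\|_1$, whose terms are bounded via Proposition~\ref{prop:unif_erg} and Poisson thinning.

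Second, I would prove the claim $\E_\mu[\tau_M-\taud]\le\int_0^\infty\P_\mu(\tau_M^\mathrm e\le t)\exp(-\ubar\kappa t)\dif t$. From \eqref{eq:superpos} we have $\tau_M-\taud=(\tau_M-\tau_M^\mathrm e)^+$, so by Fubini
\begin{equation*}
\E_\mu[\tau_M-\taud]=\int_0^\infty \P_\mu(\tau_M>\tau_M^\mathrm e+s)\dif s.
\end{equation*}
Conditioning on the local path $Y$ and on $\tau_M^\mathrm e$, and using Poisson superposition (which makes $\tau_M$ and $\tau_M^\mathrm e$ conditionally independent given $Y$) together with $\kappa_M\ge\ubar\kappa$, we obtain
\begin{equation*}
\P_\mu\!\bigl(\tau_M>\tau_M^\mathrm e+s\,\big|\,Y,\tau_M^\mathrm e\bigr)=\exp\!\left(-\int_0^{\tau_M^\mathrm e+s}\kappa_M(Y_u)\dif u\right)\le\exp\!\bigl(-\ubar\kappa(\tau_M^\mathrm e+s)\bigr).
\end{equation*}
Taking expectations and changing variable $t=\tau_M^\mathrm e+s$ gives the stated integral bound, which combined with the first stage completes the argument.

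The main point where care is needed is the conditional computation in the second stage: since $\tau_M$ and $\tau_M^\mathrm e$ are only conditionally independent given the driving path $Y$, the conditioning and use of $\kappa_M\ge\ubar\kappa$ must be performed path-by-path before integrating over the randomness of $Y$. Once this bookkeeping is carried out, the two stages combine immediately to yield the theorem.
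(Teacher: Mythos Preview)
Your argument is correct and follows essentially the same route as the paper's: both proofs express $\pi$ and $\pi_M$ via the regenerative representation, bound the difference of the two ratios algebraically, and then control the key quantity $\int_0^\infty \P_\mu(\tau_M^{\mathrm e}\le t,\ \tau_M>t)\,\dif t$ (which equals your $\E_\mu[\tau_M-\taud]$) using conditional independence of $\tau_M$ and $\tau_M^{\mathrm e}$ given $Y$ together with $\kappa_M\ge\ubar\kappa$. Your version is actually slightly cleaner: by bounding $|N_M-N|\le \E_\mu[\tau_M-\taud]$ directly for signed $f$ with $\|f\|_\infty\le 1$, you avoid the paper's restriction to nonnegative test functions and the attendant doubling, so you obtain the bound with constant $2$ rather than $4$. (One small gap: to pass from denominator $D_M$ to $D=\E_\mu[\taud]$ you are implicitly using $D_M\ge D$, which follows from $\tau_M\ge\taud$; just state it.) Your remark about ``absorbing the extra factor of $2$'' via a three-term splitting is therefore unnecessary and somewhat misleading---a bound with constant $2$ trivially implies the stated bound with constant $4$, so you should simply note that you have proved something slightly stronger.
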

\begin{proof}
    See Appendix~\ref{appen:other_proofs}.
\end{proof}

\begin{remark}
    To use this bound we need to further bound 
    \begin{equation*}
    \P_x(\tau_M^\mathrm e\le t).
\end{equation*}
Intuitively, if $\kappa_M$ is a reasonable approximation for $\kappa$, then $\kappa_M^\mathrm e$ is low, and hence $\tau_M^\mathrm e$ tends to be large, and so this bound is tighter.
\end{remark}

\begin{proposition}
    Fix a regeneration distribution $\mu$. We have that 
    \begin{equation}
        \int_0^\infty\dif t \, \P_\mu (\tau_M^\mathrm e \le t)\,\mathrm e^{-t\ubar\kappa}\to 0 \text{ as } M \to\infty.
    \label{eq:bartaud_to_0}
    \end{equation}
        Thus by Theorem \ref{Thm:bias_M} as $M\to \infty$,
    \begin{equation*}
        \|\pi_M-\pi\|_1 \to 0.
    \end{equation*}
    \label{prop:trunc_bd}
\end{proposition}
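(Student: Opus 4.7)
The plan is to prove the convergence $\int_0^\infty \P_\mu(\tau_M^\mathrm e \le t)\,\mathrm e^{-t\ubar\kappa}\,\mathrm{d}t \to 0$ via two successive applications of dominated convergence, and then invoke Theorem~\ref{Thm:bias_M} together with the fact that $\E_\mu[\taud]$ is a fixed finite positive constant.

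First I would obtain a pathwise representation of $\P(\tau_M^\mathrm e \le t \mid Y)$ using \eqref{eq:tau_Me}: conditional on the trajectory of $Y$, the excess arrival $\tau_M^\mathrm e$ is the first point of a Cox process with rate $\kappa_M^\mathrm e(Y_s)$, so
\begin{equation*}
    \P(\tau_M^\mathrm e \le t \mid Y) = 1 - \exp\left(-\int_0^t \kappa_M^\mathrm e(Y_s)\,\mathrm{d}s\right).
\end{equation*}
Next I would show that the integrated rate tends to $0$ pathwise as $M \to \infty$. Since $Y$ is a diffusion with almost surely continuous sample paths and $\kappa$ is continuous, the image $\{Y_s : s \in [0,t]\}$ is compact and $\kappa(Y_s)$ is bounded on $[0,t]$ a.s.; in particular $\int_0^t \kappa(Y_s)\,\mathrm{d}s < \infty$ a.s. Since $\kappa_M^\mathrm e(y) = (\kappa(y) - M)_+$ is monotone decreasing in $M$ with pointwise limit $0$, monotone convergence gives $\int_0^t \kappa_M^\mathrm e(Y_s)\,\mathrm{d}s \to 0$ almost surely, hence $\P(\tau_M^\mathrm e \le t \mid Y) \to 0$ almost surely. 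A first application of dominated convergence (the conditional probability is bounded by $1$) then yields $\P_\mu(\tau_M^\mathrm e \le t) \to 0$ for each fixed $t$.

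A second application of dominated convergence handles the time integral: the integrand $\P_\mu(\tau_M^\mathrm e \le t)\,\mathrm e^{-t\ubar\kappa}$ is bounded by $\mathrm e^{-t\ubar\kappa}$, which is integrable on $[0,\infty)$ because $\ubar\kappa > 0$. Combined with the pointwise convergence just established, this gives \eqref{eq:bartaud_to_0}. Finally, under Assumption~\ref{assump:CFTP} we have $\taud$ stochastically dominated by an $\mathrm{Exp}(\ubar\kappa)$ variable, so $0 < \E_\mu[\taud] \le 1/\ubar\kappa < \infty$, and this denominator does not depend on $M$. Plugging \eqref{eq:bartaud_to_0} into the bound of Theorem~\ref{Thm:bias_M} gives $\|\pi_M - \pi\|_1 \to 0$.

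The argument is essentially routine; the only subtlety is verifying that $\int_0^t \kappa(Y_s)\,\mathrm{d}s$ is almost surely finite to license the pathwise monotone/dominated convergence, which relies on the continuity of sample paths of the diffusion together with the continuity (hence local boundedness) of $\kappa$.
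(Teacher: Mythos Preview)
Your proposal is correct and follows essentially the same approach as the paper: both arguments reduce to the observation that, by continuity of sample paths and local boundedness of $\kappa$, one has $\sup_{s\le t}\kappa(Y_s)<\infty$ almost surely, and then apply dominated convergence in $x$ (or over the path) and in $t$. The only cosmetic difference is that the paper reaches $\P_x(\tau_M^\mathrm e\le t)\to 0$ via the event inclusion $\{\tau_M^\mathrm e\le t\}\subset\{\sup_{s\le t}\kappa(Y_s)\ge M\}$ and continuity from above of the decreasing events, whereas you use the explicit Cox-process formula $1-\exp\bigl(-\int_0^t\kappa_M^\mathrm e(Y_s)\,\mathrm{d}s\bigr)$ and monotone convergence of the integrated excess rate; these are interchangeable.
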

\begin{proof}
The event $\left\{ \tau_M^\mathrm{e} \le t \right\}$ is contained
in the event $\left\{ \sup_{s\le t} \kappa(Y_s) \ge M \right\}$.
Thus, for any fixed $x$
\begin{align*}
    \lim_{M\to\infty} \P_x (\tau_M^\mathrm e \le t) & =
      \P_x \left( \bigcap_{M=1}^\infty \left\{ \tau_M^\mathrm{e} \le t \right\}\right)\\
      &\le \P_x \left( \sup_{s\le t} \kappa(Y_s)=\infty \right)\\
      &\le \P_x \left( \sup_{s\le t} \| Y_s\|=\infty \right) \text{
       since $\kappa$ is locally bounded}\\
      &=0 \text{ since $Y$ is nonexplosive}.
\end{align*}
By the Dominated Convergence Theorem it follows that
$$
    \lim_{M\to\infty} \int_0^\infty \dif t \, \mathrm e^{-t\ubar\kappa} \int_E \dif\mu(x) \P_x 
    \left(\tau_M^\mathrm e \le t \right) =0,
$$
which is precisely \eqref{eq:bartaud_to_0}.
\end{proof}

In order for Theorem \ref{Thm:bias_M} to be of practical use, we will further need bounds on 
    \begin{equation}
    \P_x(\tau_M^\mathrm e\le t),
    \label{eq:tau_ME_probs}
\end{equation}
which will vary given the particular situation; given the choice of the underlying diffusion $Y$, target $\pi$ and regeneration density $\mu$. 

The rate at which the probabilities \eqref{eq:tau_ME_probs} decay as a function of $M$ will crucially depend on the rate at which the regeneration rate $\kappa$ grows. Thus we define the following,
\begin{equation*}
    L(M) := \sup\bigl\{\ell >0: \sup\{ \kappa(x)\,:\,  x\in [-\ell,\ell]^d \} \le M \bigr\},
\end{equation*}
which for a given truncation level $M$ defines the largest hypercube on which no truncation occurs.

The rate at which $L(M)$ grows as $M\to\infty$ will crucially dictate the rate at which the error decays. Then let
\begin{equation*}
    H(M) := [-L(M), L(M)]^d \subset\Rd,
\end{equation*}
and let
\begin{equation*}
    T_{M} := \inf \{t\ge 0: Y_t \in  \Rd \backslash H(M)\}
\end{equation*}
be the first hitting time of the diffusion $Y$ (without regenerations) of the complement of $H(M)$. Clearly we must have
\begin{equation*}
    T_M \le \tau_M^\mathrm e.
\end{equation*}
Thus it follows that
\begin{equation*}
    \int_0^\infty \P_x (\tau_M^\mathrm e \le t) \, e^{-\ubar \kappa t}\dif t \le \int_0^\infty \P_x(T_{M} \le t)\,e^{-\ubar \kappa t}\dif t .
\end{equation*}

To proceed from here we require knowledge of the distribution of the hitting times $T_M$ for the underlying diffusion $Y$. At this point we will \textit{specialize to the case of Brownian motion}; however, a similar analysis can be performed in any situation where we have analogous bounds on the hitting times.

By the reflection principle for one-dimensional Brownian motion we know that for any $a>0$,
\begin{equation*}
    \begin{split}
            \P\left(\sup_{0\le s\le t}|B_s|>a\right)&\le 2 \P\left (\sup_{0\le s\le t} B_s >a\right )=4\P(B_t >a)\\
            &= 4 \left(1-\Phi\left (\frac {a}{\sqrt t}\right)\right).
    \end{split}
\end{equation*}
Here $\Phi$ denotes the standard univariate normal cumulative distribution function. For a multidimensional standard Brownian motion, it follows that
\begin{equation*}
    \P_0\left(T_M\le t \right) \le 4d \left(1-\Phi\left (\frac {L(M)}{\sqrt t}\right)\right).
\end{equation*}
This is because leaving a hypercube is the same as having some component leaving the interval $[-L(M), L(M)]$.

We now make use of the well-known bound for the standard normal cumulative distribution function: for each $\lambda >0$,
\begin{equation*}
    1 - \Phi(\lambda) < \frac{1}{\sqrt{2\pi}\lambda} e^{-\lambda^2/2}.
\end{equation*}
This leads to the bound
\begin{equation*}
    \int_0^\infty \P_0(T_M \le t)\,e^{-\ubar \kappa t}\dif t \le \frac {4d}{\sqrt {2\pi}} \int_0^\infty \frac{\sqrt t}{L(M)} e^{-{L(M)^2}/(2t)} e^{-\ubar\kappa t}\dif t.
\end{equation*}
This integral can be evaluated analytically\footnote{\url{https://www.wolframalpha.com/input/?i=int_0\%5Einfty+\%5Csqrt+(t)+exp(-a\%2F(2t))+exp(-t)dt}}, to obtain
\begin{align*}
    \int_0^\infty \P_0(T_M \le t)\,e^{-\ubar \kappa t}\dif t &\le \frac{4d}{\sqrt{2\pi}}\frac{\sqrt \pi}{2\ubar\kappa} \left( \sqrt 2 +\frac{1}{L(M)\sqrt {\ubar\kappa}}\right) e^{-\sqrt{2\ubar\kappa} L(M)}\\
    &=\frac{2d}{\ubar\kappa}\left(1+\frac{1}{L(M) \sqrt{2\ubar\kappa}}\right) e^{-\sqrt{2\ubar\kappa} L(M)}.
\end{align*}
So for large values of $M$ we have a bound that decays like
\begin{equation*}
    e^{-\sqrt{2\ubar\kappa} L(M)}.
\end{equation*}
This can be used to give practical suggestions of how large to choose $M$ in order to balance the bias and variance of the algorithm's output.

Suppose we are able to obtain $n$ i.i.d. draws $X_1, \dots, X_n \sim \pi^M$, say by running the CFTP algorithm a total of $n$ times.
For a bounded test function $f$, we estimate $\nu_\pi[f]$ by 
\begin{equation*}
    \sum_{i=1}^n \frac{f(X_i)}{n}.
\end{equation*}
We estimate the error roughly as
\begin{align*}
    \bigg|\sum_{i=1}^n \frac{f(X_i)}{n} - \nu_\pi[f]\bigg |&\le \underbrace{\bigg |\sum_{i=1}^n \frac{f(X_i)}{n}- \pi^M(f)\bigg|}_{\sim \frac{1}{\sqrt n}}+\underbrace{|\pi^M(f)-\nu_\pi[f]|}_{\le \|f\|_\infty \|\pi^{M}-\pi\|_{\text{TV}}}\\
    &\approx O\bigg (\frac{1}{\sqrt n}\bigg ) + \exp\left(-\sqrt{2\ubar\kappa}L(M)\right).
\end{align*}

In order to balance these two terms, it is advisable to choose $n$ and $M$ such that
\begin{align*}
    \frac{1}{\sqrt n} &\sim \exp\left(-\sqrt{2\ubar \kappa} L(M)\right) \\
    \Rightarrow \frac{\log n}{ 2\sqrt {2\ubar\kappa}} &\sim L(M).
\end{align*}
So this gives some indication of how to choose $M$, given $n$. This will achieve an error of order roughly $O(n^{-1/2})$. The computational 
cost in $n$ will be roughly $O(n\log n)$.

\section{Examples} 
\label{sec:Restore_examples}
In this section we give some univariate examples which highlight key aspects of our Restore methodology. 
A thorough investigation of the computational properties of Restore is an important and challenging task, which is outside the scope of this present work and will be the topic of future research.

\subsection{Cauchy posterior}
We first give an example where $\pi$ has heavy tails and is multimodal, where we can apply coupling from the past.

This example is based on Example 3.1 of \cite{Murdoch2000}. We take 
\begin{equation}
    \bar\pi(x) \propto \prod_{i=1}^n \frac{1}{1+(y_i-x)^2},
    \label{eq:pi_multim}
\end{equation}
for some observations $(y_1, \dots, y_n) \in \R^n$, with respect to Lebesgue measure on $\R$. (We use the notation $\bar \pi$, since in the notation of Section~\ref{subsec:symm_diff}, the symbol $\pi$ is reserved for the target density with respect to the measure $\Gamma$.)

This can be thought of as the posterior distribution for i.i.d. Cauchy($x$) data, with an improper uniform prior on $\R$ for $x$. In Example 3.1 of \cite{Murdoch2000}, the author considers a very similar target with lighter tails. 
We will take the same data as \cite{Murdoch2000}, namely $n=3$ and observations $(1.3, -11.6, 4.4)$. The resulting posterior is plotted in red in Figure~\ref{fig:murdoc_post_samples}. 
Our sampling approach here is similar to that of \cite{Murdoch2000}; we are also combining local and global dynamics, but we will choose diffusive local dynamics which rapidly enter the tails.

As such, for our underlying process, we will take the following diffusion: an \textit{unstable} Ornstein--Uhlenbeck process, described by the SDE
\begin{equation}
    \dif Y_t = Y_t \dif t + \dif B_t,
    \label{eq:SDE_out_OU}
\end{equation}
where $B$ is a standard univariate Brownian motion. We showed in Section~\ref{subsubsec:BASSA_ex} that this diffusion satisfies the BASSA conditions (Assumption \ref{assump:symmetric_markov_bassa}). This diffusion, like a stable Ornstein--Uhlenbeck process, is also a Gaussian process with known finite-dimensional distributions, and so can be simulated easily without error.

For the regeneration distribution we will take the minimal regeneration distribution $\mu^*$ from Section~\ref{subsec:minimal_mu}, with $\ubar \kappa =4$. This distribution is compactly supported, and samples can be efficiently obtained through rejection sampling from a uniform distribution.
In this setting the regeneration rate is uniformly bounded from above, and so we can directly make use of Poisson thinning, as in Algorithm~\ref{alg:bdd_CFTP}. 
The various assumptions as in Section~\ref{subsec:symm_diff} are easily verified to hold in this setting.

Thus we are able to apply the CFTP implementation (Section~\ref{subsec:CFTP}) to obtain independent and identically distributed draws from $\pi$. 
A histogram consisting of 30,000 draws from the CFTP implementation are plotted in Figure~\ref{fig:murdoc_post_samples}. 
These were obtained by running the CFTP algorithm 30,000 times independently.

\begin{figure}
    \centering
    \includegraphics[width=0.8\textwidth]{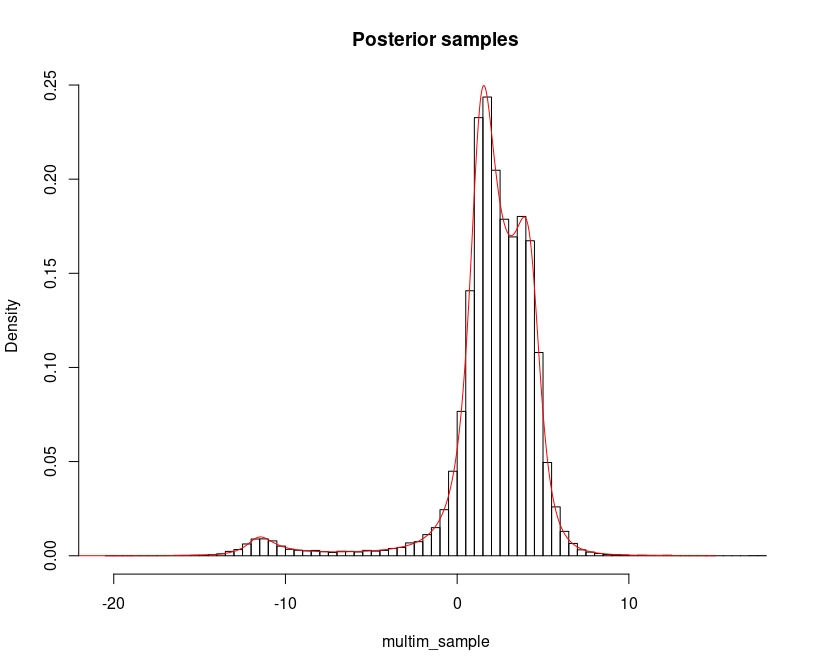}
    \caption{The heavy-tailed multi-modal target distribution $\pi$ of \eqref{eq:pi_multim} (red), and 30,000 samples obtained from the CFTP implementation. These are i.i.d. draws from $\pi$.}
    \label{fig:murdoc_post_samples}
\end{figure}

\subsection{Jump Restore example}
\label{subsubsec:jump_restore}
We turn now to an example of jump process Restore (Section~\ref{subsec:jump_proc}), where we use Restore to introduce rejection-free moves into an existing sampler.

A situation where Assumption \ref{assump:MCMC} is easily checked is when $P$ corresponds to a Markov chain that is already $\pi$-invariant, for instance the kernel of an appropriate MCMC algorithm targeting $\pi$.
In this case we can easily embed $P$ into continuous time without changing the asymptotic dynamics, 
just by taking constant holding rates $\lambda \equiv 1$.
In this case the regeneration rate reduces to
\begin{equation*}
    \kappa(x) = C \frac{\mu(x)}{\pi(x)}, \quad x \in E,
\end{equation*}
and we see that any choice of $C>0$ will ensure nonnegativity of $\kappa$. This gives a recipe to introduce rejection-free moves to a discrete sampler in continuous time.

Consider the following example, in one dimension for ease of visualisation. Writing $\phi(\cdot; \nu, \sigma^2)$ for the univariate Gaussian density with mean $\nu \in \R$ and variance $\sigma^2 >0$, take as the target $\pi$ on $\R$:
\begin{equation*}
    \pi(x) = 0.1\, \phi(x; -22, 3^2) + 0.3\, \phi(x; -1, 0.2^2) + 0.6\, \phi(x; 15, 1^2), \quad x \in \R.
\end{equation*}
For the regeneration density $\mu$, we take
\begin{equation*}
    \mu(x) = \frac{1}{3} \left( \phi(x; -29, 0.3^2)+ \phi(x;3, 1^2) + \phi(x; 10, 1^2) \right), \quad x \in \R.
\end{equation*}

We take the underlying process $Y$ to be Random Walk Metropolis with variance 1 embedded in continuous time, with constant holding rate 1. We took the constant $C=1$ in the regeneration rate.

We have plotted a histogram after 300,000 steps of the jump chain (taking into account holding times) in Figure~\ref{fig:jump_rest_hist} and in Figure~\ref{fig:jump_rest_traj} we have plotted the continuous-time trajectory of the first 50,000 jump steps of this run.

\begin{figure}
    \centering
    \includegraphics[width=0.8\textwidth]{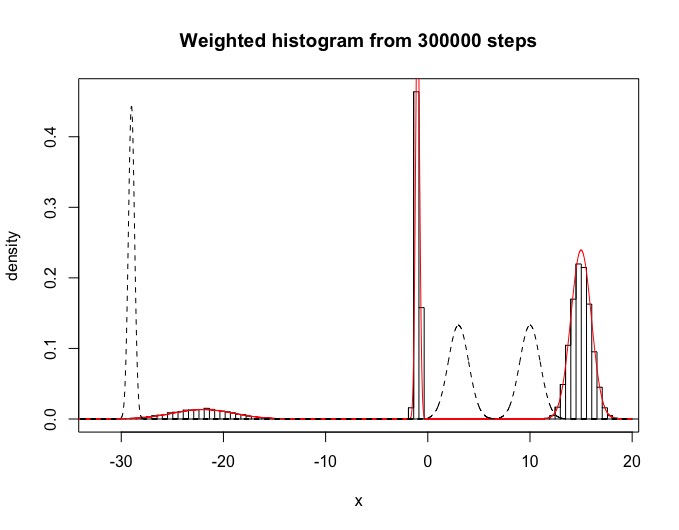}
    \caption{The target density $\pi$ (red) and regeneration density $\mu$ (dashed) for the jump Restore example, along with a weighted histogram of the Restore run, taking into account holding times.}
    \label{fig:jump_rest_hist}
\end{figure}

\begin{figure}
    \centering
    \includegraphics[width=0.8\textwidth]{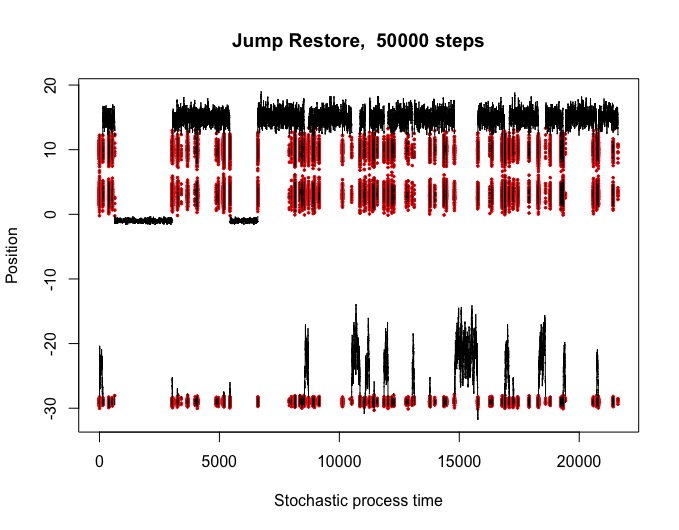}
    \caption{The continuous-time trajectory of the first 50,000 steps of the jump Restore sampler. The proportion of regeneration moves was roughly $0.496$. The red points are the regenerations.}
    \label{fig:jump_rest_traj}
\end{figure}


\section{Conclusions}
\label{sec:Restore_conclusions}
In this work we have introduced and studied the {\em Restore process}, which is obtained by enriching an existing {\em local} continuous-time Markov process with {\em global} regenerative dynamics. We have focused particularly on how it could be applied within Monte Carlo methodology to sample from a prescribed target density of interest, $\pi$. Surprisingly, the Restore process enables us to combine continuous-time local and global dynamics -- \textit{neither of which is $\pi$-invariant} -- and by means of an inhomogeneous Poisson process compensate these dynamics to ensure the process is $\pi$-stationary. The resulting sampler is simple to implement (in many settings it is no more complex than a vanilla Metropolis--Hastings sampler), and we readily establish a central limit theorem.  Although the use of an inhomogeneous Poisson process has some natural affinity with recent Monte Carlo developments (such as piecewise-deterministic MCMC methods, \cite{Bouchard-Cote2018, Bierkens2019} and quasi-stationary Monte Carlo methods, \cite{Pollock2016, Wang2019}), the additional regenerative behavior 
provides significant benefits, both theoretically and practically. In particular, we show that the regenerative behavior allows us, with verifiable conditions, to demonstrate uniform ergodicity, avoid traditional MCMC problems such as burn-in, and even construct independent exact draws from $\pi$ by a
coupling-from-the-past scheme. On the more practical side, 
we have discussed some of the natural approximations a practitioner may make in implementing continuous-time samplers for use within the Restore process, showing that the global regenerative dynamics allow us to readily analyze such approximations, and to study and understand their effect. 

This work has been primarily focused on introducing and establishing fundamental results for the Restore process, and there is considerable scope for further development. From a practical perspective, it is of interest to consider its use in different settings, with new classes of local and global dynamics. For instance, in the setting of continuous-time local dynamics one could consider piecewise-deterministic Markov processes, or even ODE flows, which will have properties particularly suited to certain problems. 
Or the construction of the global dynamics --- the regeneration density --- might
utilize \textit{other} inference about the target density $\pi$, obtained, say, by a simpler but approximate scheme. Thus in effect one could use Restore to remove the error from existing statistical approaches. 
Indeed, the flexibility offered by Restore, in combining continuous-time local and global dynamics which are not independently $\pi$-invariant, suggests that we are still far from espying the limits to which this general framework might be extended. 
For instance, it may be possible to use the framework to compensate multiple local and global dynamics, or to have global dynamics which are themselves adaptive to the accrued information of the process. Other prospective applications of the Restore process include exploiting the regenerative structure of the process for use within parallel computing architectures, embedding Restore 
within other Monte Carlo methodologies which require independent
exact draws from the target distribution, something that
Restore (unlike MCMC) can provide.

From a theoretical perspective, our understanding of the Restore
process is nowhere near as complete as we should like, 
beginning with the lack of a single unified proof of invariance
of the target distribution, and proceeding to the need for 
an appropriate definition of `efficiency' or `optimality' for choosing appropriate dynamics. Any reckoning with these notions
would have to take account of the temporally varying computational
cost of simulating the process with multiple dynamics, which is beyond the scope of this paper. 
Insights into issues such as these could be particularly useful in the design of appropriate diagnostics for the process, for instance in situations where the verifiable CFTP conditions do not hold.

\section{Acknowledgements}
We would like to thank Martin Kolb for supplying arguments pertaining to domains of self-adjoint operators. We would also like to thank Jere Koskela, Divakar Kumar, Hector McKimm and Abraham Ng for interesting discussions on aspects of this paper. We would like to thank the associate editor and anonymous referee for their comments which have substantially improved the paper.

Research of A.~Q.~Wang is supported by the EPSRC CoSInES (COmputational Statistical INference for Engineering and Security) grant EP/R034710/1 and the OxWaSP CDT through grant EP/L016710/1. Most of his work was conducted as a doctoral student at the Department of Statistics, University of Oxford, and a preliminary version of this work appears in his doctoral thesis, \cite{Wang2020}.
Research of M.~Pollock is supported by The Alan Turing Institute under the EPSRC grant EP/N510129/1, and his contribution was largely conducted while at the University of Warwick.
Research of G.~O.~Roberts is supported by EPSRC grants
EP/R034710/1, EP/R018561/1, EP/K014463/1. 
Research of D.~Steinsaltz is supported by BBSRC grant
BB/S001824/1.

\begin{appendices}
\section{Proofs}

\subsection{Proof of Lemma~\ref{prop:Restore_exist}}
\label{subsec:pf_exist}
    The techniques of Chapter 61 of \cite{Sharpe1988} allow us to identify the process $Y$ killed at time \eqref{eq:taud} with the (sub-)process generated by the decreasing multiplicative functional
    \begin{equation*}
        m_t := \exp\left(-\int_0^t \kappa(Y_s) \dif s \right), \quad t \ge 0.
    \end{equation*}
    Since $Y$ is right-continuous and $\kappa$ is locally bounded, this defines a \textit{right} multiplicative functional. Then by Theorem~61.5 of \cite{Sharpe1988}, concerning processes which are generated by such right multiplicative functionals, we can conclude that our killed process $Y$ with lifetime \eqref{eq:taud} is a right process. 
    
    The resulting Restore process $X$, given in \eqref{eq:Restore_process}, is formed by concatenating independent copies of such killed processes with initial distribution $\mu$.  Exercise~14.17 of \cite{Sharpe1988} shows that the infinite concatenation of a series of independent and identically distributed right processes is yet another right processes. Hence our Restore process $X$ is indeed a right process.
    
     Since $\kappa$ is locally bounded and $Y$ is right-continuous, it follows that $\E_\mu[\taud]>0$.
    The final statement then follows from the fact that the lifetimes (apart from possibly the first) are independent and identically distributed.

\subsection{Proof of Proposition~\ref{prop:generator}}
\label{sec:proof_generator}
We have seen in the Proof of Lemma~\ref{prop:Restore_exist} that the killed process can be identified with a subprocess generated by a multiplicative functional. Hence we will seek to utilize Theorem~2.5 of \cite{Demuth2000}, which can be applied to such processes. From our Assumptions~\ref{assump:pi} and \ref{assump:kappa}, it follows that $\kappa$ is nonnegative and continuous. In particular it is bounded on compact sets. Thus the potential $V=\kappa$ belongs to the so-called \textit{local Kato class}. 
We are assuming that our underlying process $Y$ satisfies BASSA (Assumption \ref{assump:symmetric_markov_bassa}), so we have satisfied the conditions of Theorem~2.5 of \cite{Demuth2000}. 

The conclusions of the Proposition are precisely the conclusions of Theorem~2.5 of \cite{Demuth2000}, restated in our present setting.

\subsection{Proof of Lemma~\ref{lemma:L1_pi_cond}} \label{sec:proof_L1_pi_cond}
Recall that we write $W^{2,1}(\R^d)$ for the Sobolev space of measurable functions on $\R^d$ whose first and second derivatives are integrable with respect to Lebesgue measure on $\R^d$, equipped with the corresponding Sobolev norm. Precise definitions can be found in \cite{Adams1973}.

First note that the fact that $\bar\pi \in W^{2,1}(\R^d)$ along with the integral assumptions imply that
\begin{equation*}
    \int_{\R^d} \pi(x)\kappa(x) \dif \Gamma(x) <\infty,
\end{equation*}
since we can write
\begin{equation*}
    \pi \kappa\gamma = \frac{1}{2}\Delta \bar\pi - \nabla A\cdot \nabla \bar\pi - \Delta A \bar\pi + C\mu \gamma.
\end{equation*}

Now, as in the proof of Theorem~3.18 of \cite{Adams1973}, let $f:\Rd\to \R$ be a mollifier, satisfying properties (i), (ii) and (iii) described therein with ${m=2}$. Taking the square if necessary, we can assume that $f$ is nonnegative. In particular, $f$ and its derivatives up to order 2 are bounded pointwise in absolute value by a constant $M$.
We can now define similarly for each $n\in \mathbb N$,
$  \pi_n := f_n\pi,$
where $f_n(x) := f(x/n), x\in \R^d$.

Since $\pi$ is smooth, the $\pi_n$ are a sequence of smooth, compactly supported functions with the following properties: $\bar\pi_n := \gamma \cdot \pi_n$ converges to $\bar\pi = \gamma\cdot \pi$ pointwise and in $W^{2,1}(\R^d)$ (as in 
the proof of Theorem~3.18, \cite{Adams1973}), and we have $\pi_n \le M \pi$ pointwise, uniformly over $n$.
This implies, in particular, that $\pi_n$ converges to $\pi$ in $\L^1(\Gamma)$.

Using the relation $\nabla A = \frac{\nabla \gamma}{2\gamma}$ we can relate the action of
$L^0$ on $\Gamma$-densities to its action on Lebesgue densities:
\begin{equation} \label{eq:piLebesgue}
    -\gamma L^0 \pi_n =\frac12 \Delta \pi_n +\nabla A \cdot \nabla\pi_n = \frac{1}{2}\Delta \bar \pi_n - \nabla A \cdot \nabla \bar \pi_n - (\Delta A)\, \bar\pi_n.
\end{equation}
This will allow us to show that $-\gamma L^0 \pi_n$ converges in $\L^1(\R^d)$ --- $\R^d$ equipped with Lebesgue measure --- to
\begin{equation*}
    \frac{1}{2}\Delta \bar \pi - \nabla A \cdot \nabla \bar \pi - \Delta A\, \bar \pi.
\end{equation*}
We consider the three terms on the right-hand side individually.
Convergence of the first term is immediate since $\bar\pi_n$ converges to $\bar \pi$ in $W^{2,1}(\R)$. 
Convergence of the third term follows since $\bar \pi$ converges to $\bar \pi$ pointwise, with $\pi_n \le M \pi$, so we can make use of the Dominated Convergence Theorem. 

It remains to demonstrate the convergence of $\nabla A \cdot \nabla \bar\pi_n$.
We have
\begin{equation}
    \nabla A \cdot \nabla \bar \pi_n = f_n \nabla A \cdot \nabla \bar\pi+ \bar \pi \nabla A \cdot \nabla f_n.
    \label{eq:lemma4_pf}
\end{equation}
The first term on the right-hand side converges in $\mathcal L^1(\R^d)$ to $\nabla A \cdot \nabla \pi$ 
straightforwardly, by dominated convergence, as the mollifiers are uniformly bounded by $M$.
For the second term of \eqref{eq:lemma4_pf}, first note that $\nabla f_n(x) = n^{-1} \nabla f (x/n)$, which is bounded (in each component) by $M/n$, and the support of $\nabla f_n$ is by construction within the set $B_n :=\{ y \in \R^d: n\le |y|\le 2n\}$. Thus we have 
\begin{equation*}
    |\bar \pi(x) \nabla A \cdot \nabla f_n(x)| \le \bar\pi(x) K |x| M n^{-1} \,1_{B_n}(x) \le 2KM\bar \pi(x).
\end{equation*}
Here we used the bound on the drift $|\nabla A(x)|\le K|x|$. Thus we can apply
the Dominated Convergence Theorem once more to establish that $\bar\pi \nabla A \cdot \nabla f_n$ is converging in $\mathcal L^1(\R^d)$ to the zero function.

It follows (by reversing the application of \eqref{eq:piLebesgue}) that $-L^0 \pi_n$ converges to 
\begin{equation*}
    \frac{1}{2}\Delta \bar \pi - \nabla A \cdot \nabla \bar \pi - (\Delta A)\, \bar\pi = \frac{1}{2}\Delta \pi + \nabla A \cdot \nabla \pi
\end{equation*}
in $\L^1(\Gamma)$. 
Since $L^0$ is a closed operator, and since the sequence of smooth 
compactly supported functions $\pi_n$ belongs to $\Dom(L^0_1)$ for each $n$, we have thus established that $\pi \in \Dom(L^0_1)$.

Finally, since $\int \pi\kappa \dif \Gamma<\infty$, and we have that $\pi_n \le M \pi$, we have that $\int \pi_n \kappa \dif \Gamma \to \int \pi \kappa\dif \Gamma$.

Thus 
\begin{equation*}
    L^\kappa \pi_n \to L^0 \pi + \kappa \pi = C \mu
\end{equation*}
in $\L^1(\Gamma)$. This shows that $\pi \in \Dom(L^\kappa_1)$ and $L^\kappa_1 \pi = C\mu$, concluding the proof of Lemma~\ref{lemma:L1_pi_cond}.

\subsection{Proof of Theorem~\ref{thm:pi_inv_symm}} \label{sec:proof_restore_invariant_diffusion}
We want to prove that $\pi$ is an invariant distribution for the Restore process $X$ with interarrival dynamics $Y$, regeneration rate $\kappa$ as defined in \eqref{eq:kappa_symm} with regeneration density $\mu$. We are in the setting $(E,m) = (\Rd, \Gamma)$.

We know that the Restore process $X$, formed by concatenating copies of the killed process, is a strong Markov process; see Lemma~\ref{prop:Restore_exist} and its proof. Let $\{P^\mu_t:t\ge 0\}$ denotes its semigroup.

Our goal is to show 
$$
    t\mapsto \pi P_t^\mu f := \int \dif \Gamma(x) \pi(x) [P_t^\mu f](x)=\E_\pi[f(X_t)]
$$ 
is constant in $t$. By time homogeneity it suffices to show that the time-derivative 
is 0 at $t=0$. This is the same method used to prove $\pi$-invariance of the Bouncy Particle Sampler in the supplementary material of \cite{Bouchard-Cote2018}.

The Restore process naturally exhibits renewal behavior, since the individual lifetimes
are independent and identically distributed. So we will seek a renewal-type representation of the semigroup $P_t^\mu$ by conditioning on the first arrival $\taud$. Since $\kappa$ is locally bounded, $\taud$ is absolutely continuous on $\R^+$, hence will possess a density with respect to Lebesgue measure on $\R^+$.

Since $\kappa$ is nonnegative, the semigroup $\exp(-t\Lk)$ can also be expressed as
\begin{equation*}
    [\exp(-t\Lk)f](x) = \E_x\left[f(Y_t) 1\{\taud > t\} \right],
\end{equation*}
where $\taud$ is defined as in \eqref{eq:taud} for each $f$ where the integral is well-defined.

Note that we have
\begin{equation}
    -\Lk \pi = -L^0 \pi - \kappa \pi =  -C \mu.
    \label{eq:Lkpi}
\end{equation}
This equation holds formally, where we view $\Lk$ and $L^0$ as formal differential operators, and
as a statement about the $\L^1(\Gamma)$ generator by Assumption~\ref{assump:technical}. 
Since we additionally assume that in Assumption~\ref{assump:technical} that both $\pi$ and $\mu$ 
are in $\L^2(\Gamma)$ it follows that \eqref{eq:Lkpi} also holds for the $\L^2(\Gamma)$ generator as well.

Consider 
\begin{align*}
    \P_\pi (\taud >t) &= \int \dif \Gamma(x) \pi(x) \int \dif \Gamma(y) \,p^\kappa(t,x,y)\\
    &= \int \dif \Gamma(y) \int \dif \Gamma(x) \, \pi(x) p^\kappa(t,y,x) \\
    &= \int \dif \Gamma(y) [\exp(-t\Lk)\pi](y),
\end{align*}
where the second line relies on Tonelli's theorem to exchange the order of integration, 
and uses the symmetry of $p^\kappa$ to replace $p^\kappa(t,x,y)$ by $p^\kappa(t,y,x)$. 
The final integral is well-defined since $\pi \in \L^1(\Gamma)$ and the semigroup $\etLk$ maps $\L^1(\Gamma)$ to itself, by Proposition~\ref{prop:generator}. Thus by strong continuity and the fact that $\pi \in \Dom(L^\kappa_1)$ (Assumption \ref{assump:technical}) we can differentiate this expression to find
\begin{align*}
    \frac{\dif \phantom{t}}{\dif t}\P_\pi(\taud > t)|_{t=s} &= \int \dif \Gamma(y) [\esLk (-\Lk\pi)](y) \\
    &= -\int \dif \Gamma(y) [\esLk (C\mu)](y)\\
    &= -C \int \dif \Gamma(x) \mu(x) [\esLk 1](x) .
\end{align*}
The second line applies \eqref{eq:Lkpi} again, 
while the final equality relies once more
on Assumption \ref{assump:technical} and symmetry of the semigroup.

This shows that the density on $\R^+$ with respect to Lebesgue measure of the first arrival time under $\P_\pi$ is given by
\begin{equation*}
    h(s) = C \int \dif \Gamma(x) \mu(x) [\exp(-s\Lk) 1](x) =  C\, \P_\mu(\taud > s) ,\quad s \ge 0,
\end{equation*}
and that $C = 1/\E_\mu[\taud]$.
This allows us to represent the semigroup of the Restore process started in $\pi$ as
\begin{align*}
    \pi P_t^\mu f &= \int_0^t  C 
    \P_\mu(\taud > s)\,
    \mu P_{t-s}^\mu f\dif s + \pi \etLk f \\
    &= C \int_0^t  
    \P_\mu(\taud > t-s)\,
    \mu P_s^\mu f \dif s+ \pi \etLk f.
\end{align*}
Our goal is to differentiate this expression with respect to $t$, and to show that the derivative at $t=0$ is zero. 

Consider any bounded $f$ in $\Dom(L^\kappa_2)$. From the representation above we can see that $t\mapsto \pi P_t^\mu f$ is a continuous function.
Starting from 
\begin{equation*}
\begin{split}
        t \mapsto \P_\mu(\taud > t) &= \int \dif \Gamma(x) \mu(x)\int p^\kappa(t,x,y) \dif \Gamma(y) \\
        &= \E_\mu^0 \left[\exp\left( -\int_0^t \kappa(Y_s)\dif s \right) \right],
\end{split}
\end{equation*}
our technical assumption \eqref{eq:tech_cond_mu} allow us to differentiate under the integral sign to obtain
\begin{equation*}
    g(s):= -\frac{\dif\phantom{t}}{\dif t} \P_\mu(\taud>t)|_{t=s} = \E_\mu^0\left[ \kappa(Y_s) \exp\left( -\int_0^s \kappa(Y_u)\dif u \right) \right]
\end{equation*}
for each $s \in [0,1]$. $g$ is a continuous function, and will be uniformly bounded over $s \in [0,1]$.

Conditioning, as above, on the first regeneration time, we then have
\begin{equation*}
    \mu P_t^\mu f = \int_0^t  
    g(s)\,
    \mu P^\mu_{t-s} f \dif s 
    + \mu \etLk f,
\end{equation*}
showing that $t\mapsto \mu P^\mu_t f$ is also a continuous function.
By Leibniz's rule:
\begin{align*}
    \frac{\dif }{\dif t} \pi P_t^\mu f \bigg |_{t=s}=  C\int_0^s  
    g(s-u)\,\mu P_u^\mu f 
    \dif u 
    +  C \mu(1) \mu P_s^\mu f + \pi \exp(-t\Lk) (-L^\kappa f). 
\end{align*}
Taking $t=0$, we find
\begin{equation*}
    \frac{\dif }{\dif t} \pi P_t^\mu f \bigg |_{t=0} = C \mu(f) + \pi (-L^\kappa f).
\end{equation*}
Since we chose $f \in \Dom(L^\kappa_2)$, and since we assumed $\pi \in \Dom (L^0_2)\cap \Dom(L_2^\kappa)$ (Assumption~\ref{assump:pi}), this final expression is equal to
\begin{equation*}
    C \mu(f) + \pi(-L^\kappa f)=\int \dif \Gamma(x) \,f(x) \left ( C \mu(x)+ (-L^0\pi)(x) - \kappa(x)\pi(x) \right).
\end{equation*}
This will equal 0 for any such $f$ if
\begin{equation*}
    \kappa(x) = \frac{-L^0 \pi}{\pi}(x) + C\frac{\mu(x)}{\pi(x)}, \quad x \in \Rd,
\end{equation*}
which is exactly our \eqref{eq:kappa_symm}. This concludes the proof of Theorem~\ref{thm:pi_inv_symm}.


\subsection{Proof of Theorem~\ref{Thm:bias_M}}
\label{appen:other_proofs}
Recall the expression for the invariant distribution
\begin{equation*}
    \nu_\pi[f]= \frac{\E_\mu\left[ \int_0^{\taud} f(Y_s) \dif s \right]}{\E_\mu[\taud]}.
\end{equation*}
We can rewrite this by exchanging the order of integration. Consider the resolvent operator, which maps measurable functions to measurable functions,
\begin{align*}
    \Res f(x) :=& \int_0^\infty \dif t\, \E_x [f(Y_t)1\{\taud>t\}]\\
    =& \int_0^\infty \dif t \, \E_x \left[ f(Y_t) 1\{ \tau_M>t\}1\{\tau_M^\mathrm e>t\}\right],
\end{align*}
where the second equality holds by \eqref{eq:superpos}.
Note that given a bounded measurable function $f$, $\Res f$ is also a bounded measurable function, since we can bound
\begin{equation*}
    |\Res f(x)| \le \|f\|_\infty \E_x[\taud] \le \|f\|_\infty \,\ubar\kappa^{-1}.
\end{equation*}
Thus by Fubini's theorem, we can write
\begin{equation*}
    \pi = \frac{\mu \Res}{\mu \Res 1},
\end{equation*}
in analogue with expressions given in \cite{wang2019approx} and \cite{Benaim2018}.

The invariant distribution $\pi_M$ of the process with truncated rate can be represented in a similar way. Write
\begin{equation*}
    \pi_M = \frac {\mu \Res_M}{\mu \Res_M 1},
\end{equation*}
where for bounded measurable $f$,
\begin{equation*}
    \Res_M f(x) = \int_0^\infty \dif t\, \E_x [f(Y_t)1\{\tau_M>t\}].
\end{equation*}

    We have that
\begin{align*}
    \pi_M-\pi &= \frac{(\mu\Res1) \mu\Res_M - (\mu\Res_M 1)\mu\Res }{(\mu\Res_M 1)(\mu\Res1)}\\
    &= \frac{\mu\Res 1 (\mu\Res_M - \mu\Res) +(\mu\Res1 - \mu\Res_M 1)\mu\Res}{(\mu\Res_M 1)(\mu\Res1)}.
\end{align*}
So we would like to bound $$ |\mu\Res f - \mu\Res_M f|$$ for arbitrary bounded measurable $f$.

For any nonnegative bounded measurable $f$,
\begin{align*}
    |\mu\Res f - \mu\Res_M f|&\le  \int \mu(\dif x)\int \dif t\, |\E_x[f(Y_t)1\{\tau_M >t\}1\{\tau_M^\mathrm e>t\}]\\
    &\quad\quad -\E_x[f(X_t) 1\{\tau_M>t\}] |\\
    &\le \int \mu(\dif x) \int \dif t\, \E_x \left [f(Y_t) \left(1- 1\{\tau_M^\mathrm e >t\}\right )1\{\tau_M>t\}\right]\\
    &\le \|f\|_\infty \int \mu(\dif x) \int \dif t \,\E_x [1\{\tau_M^\mathrm e \le t, \tau_M>t\}]\\
    &= \|f\|_\infty \int \mu(\dif x)\int \dif t \, \P_x (\tau_M^\mathrm e \le t, \tau_M >t).
\end{align*}
Since we are assuming that we have a lower bound $\ubar \kappa$ on the regeneration rate, and Assumption \ref{eq:M_lowerbd} holds, we can stochastically bound $\tau_M \le \tau'$ where $\tau' \sim \text{Exp}(\ubar \kappa)$ and $\tau'$ is independent of everything else. So continuing the chain of inequalities,
\begin{align*}
    &\le \|f\|_\infty \int \mu(\dif x)\int \dif t \, \P_x (\tau_M^\mathrm e \le t, \tau' >t)\\
    &= \|f\|_\infty \int \mu(\dif x) \int \dif t \,\P_x(\tau_M^\mathrm e \le t)\, \P_x(\tau' > t)\\
    &=\|f\|_\infty \int \mu(\dif x)\int \dif t \,\P_x(\tau_M^\mathrm e\le t) \,\mathrm e^{-t\ubar \kappa}.
\end{align*}
A universal upper bound on this quantity is $\|f\|_\infty/\ubar \kappa$.

For for a given continuous nonnegative bounded $f$ with $\|f\|_\infty \le 1$ we get the following bounds.
\begin{align*}
    |&\pi_M f- \pi f|\\
    &\le \frac{\mu \Res 1\|f\|_\infty \int \mu(\dif x)\int \dif t \,\P_x(\tau_M^\mathrm e \le t) \,\mathrm e^{-t\ubar \kappa} + \int \mu(\dif x)\int \dif t \,\P_x(\tau_M^\mathrm e\le t) \,\mathrm e^{-t\ubar \kappa} |\mu \Res f|}{(\mu \Res_M 1)(\mu \Res 1)}\\
    &\le \frac{\int \mu(\dif x)\int \dif t \,\P_x(\tau_M^\mathrm e\le t) \,\mathrm e^{-t\ubar \kappa}}{\mu\Res_M 1} + 
    \frac{\int \mu(\dif x)\int \dif t \,\P_x(\tau_M^\mathrm e\le t) \,\mathrm e^{-t\ubar \kappa}}{\mu\Res_M1}\\
    &= \frac{2 \int \mu(\dif x)\int \dif t \,\P_x( \tau_M^\mathrm e\le t) \,\mathrm e^{-t\ubar \kappa}}{\mu\Res_M 1}\\
    &\le \frac{2 \int \mu(\dif x)\int \dif t \,\P_x(\tau_M^\mathrm e\le t) \,\mathrm e^{-t\ubar \kappa}}{\mu\Res1}\\
\end{align*}
Since this bound is valid for only nonnegative bounded $f$, in order to bound $\|\pi_M - \pi\|_1$ we pick up an additional factor of 2.

This concludes the proof of Theorem~\ref{Thm:bias_M}.

\end{appendices}

\bibliography{Restore_bib2}
\bibliographystyle{plainnat}

\end{document}